
\documentclass[twoside]{article}
\usepackage[a4paper]{geometry}
\usepackage[latin1]{inputenc} 
\usepackage[T1]{fontenc} 
\usepackage{RR}
\usepackage{hyperref}

\usepackage{amssymb}
\usepackage{amsmath}

\usepackage{amsthm}
\usepackage{graphicx}
\usepackage{cite}
\usepackage{comment}

\newtheorem{prop}{Proposition}
\newtheorem{lemma}{Lemma}

\newtheorem{definition}{Definition}
\newtheorem{main}{Main Convergence Result}
\newtheorem{col}{Corollary} 

\newtheorem{ass}{Assumption}

\RRNo{8501}
\RRdate{March 2014}
 \RRversion{2}
 \RRdater{September 2014}
\RRauthor{
Mahmoud El Chamie\thanks{INRIA Sophia Antipolis-M{\'e}diterran{\'e}e, 2004 route des Lucioles - BP 93. 06902 Sophia Antipolis Cedex, France. Email:   mahmoud.el\_chamie@inria.fr. } \and
 
 
Ji Liu 
 \thanks[sfn]{Coordinated Science Laboratory, University of Illinois at Urbana-Champaign, Urbana, IL 61801, USA. Emails: \{jiliu,basar1\}@illinois.edu. }  \and

Tamer Ba\c{s}ar
\thanksref{sfn}}
\authorhead{M. El Chamie \& J. Liu \& T. Ba\c{s}ar}
\RRtitle{Conception et Analyse d'Algorithmes Distribu{\'e}s de Moyennage avec Valeurs {\'E}chang{\'e}es Discr{\'e}tis{\'e}es}
\RRetitle{Design and Analysis of Distributed Averaging with Quantized Communication}
\titlehead{Design and Analysis of Distributed Averaging with Quantized Communication}
\RRresume{Nous allons nous int{\'e}resser {\`a} un r{\'e}seau dont les n{\oe}uds, ou agents,  ont des valeurs initiales.   Nous souhaitons concevoir un algorithme ayant pour objectif  la convergence vers une valeur qui est la plus proche possible de la moyenne de toutes les valeurs initiales des n{\oe}uds.  Cette algorithme est bas{\'e}e sur les interaction entre les n{\oe}uds, o{\`u} un n{\oe}ud interagit avec un autre n{\oe}ud si ils sont voisins dans le graphe. Un tel algorithme est commun{\'e}ment appel{\'e} ``moyenne distribu{\'e}e''. L'objectif de cet article est d'{\'e}tudier les performances d'une sous-classe d'algorithmes d{\'e}terministes de calcul de la moyenne distribu{\'e}e, o{\`u} l'{\'e}change d'informations entre les n{\oe}uds voisins est soumis {\`a} la quantification uniforme. Avec une telle quantification, la moyenne pr{\'e}cise ne peut {\^e}tre atteinte (sauf dans des cas exceptionnels), mais une valeur proche d'elle peut {\^e}tre atteinte. Cette valeur est appel{\'e}e consensus quantifi{\'e}. Nous montrons dans ce papier que, dans un temps fini, soit tous les  agents parviennent {\`a} un consensus quantifi{\'e} o{\`u} la valeur de consensus est le plus grand entier qui n'est pas sup{\'e}rieur {\`a} la moyenne de leurs valeurs initiales; ou soit tous les agents cyclent  dans un petit voisinage autour de la moyenne, en fonction des conditions initiales. Dans ce dernier cas, il est d{\'e}montr{\'e} que le voisinage peut {\^e}tre rendue arbitrairement faible en ajustant les param{\`e}tres de l'algorithme de mani{\`e}re distribu{\`e}e.
}
\RRabstract{
Consider a network whose nodes have some initial values, and it is desired to design an algorithm that builds on neighbor to neighbor interactions with the ultimate goal of convergence to the average of all initial node values or to some value close to that average. Such an algorithm is called generically ``distributed averaging'', and our goal in this paper is to study the performance of a subclass of deterministic distributed averaging algorithms where the information exchange between neighboring nodes (agents) is subject to uniform quantization. With such quantization, convergence to the precise average cannot be achieved in general,  but the convergence would be to some value close to it, called quantized consensus. Using Lyapunov stability analysis, we characterize the convergence properties of the resulting nonlinear quantized system. We show that in finite time and depending on initial conditions, the algorithm will either cause all agents to reach a quantized consensus where the consensus value is the largest quantized value not greater than the average of their initial values, or will lead all variables to cycle in a small neighborhood around the average. In the latter case, we identify tight bounds for the size of the neighborhood and we further show that the error can be made arbitrarily small by adjusting the algorithm's parameters in a distributed manner. }
\RRmotcle{distributed averaging, quantization, finite state automata, cycle, quantized consensus}
\RRkeyword{distributed averaging, quantization, finite state automata, cycle, quantized consensus}
\RRprojet{Maestro}
\RCSophia 

\begin{document}
\makeRR   
\tableofcontents

\newpage
\section{Introduction}

There has been considerable interest recently in developing algorithms for distributing information among  members  of interactive agents via local interactions (e.g.,  a group of sensors \cite{Avrachenkov2011} or mobile autonomous agents  \cite{lynch}), especially for the scenarios where agents or sensors are constrained by limited sensing, computation, and communication capabilities.
Notable among these  are those algorithms intended to cause such a group to reach
a consensus in a distributed manner \cite{Ts3,vicsekmodel,blondell}. 
Consensus processes play an important role in many other problems such as  
Google's PageRank \cite{pagerank}, clock synchronization \cite{Schenato:2007}, 
and formation control \cite{fax}.

One particular type of consensus process, distributed averaging, has received much attention lately 
\cite{Xiao04, moura1,El-Chamie:2012:8078, Morse2011}.
In its simplest form, distributed averaging deals with a network of $n>1$ agents and
the constraint that each agent $i$ is able to communicate only with  
certain other agents called agent $i$'s neighbors. Neighbor relations are
conveniently characterized  by a simple,   connected graph in which vertices correspond to agents
 and edges indicate neighbor relations.   Each agent $i$ initially
has  or  acquires  a real number $z_i$ which might be a measurement value.
 The  {\em distributed averaging problem}  is to devise
 an algorithm which will enable each agent to compute the average
$z_{ave} = \frac{1}{n}\sum_{i=1}^nz_i$ using only information acquired from  its neighbors.

Most existing algorithms for precise distributed averaging 
require that agents are able to send and receive real values with infinite precision. 
However, a realistic network can only allow messages with limited length to be transmitted between agents
due to constraints on the capacity of communication links.
With such a constraint, when a real value is sent from an agent to its neighbors, 
this value will be truncated and only a quantized version will be received by the neighbors. 
With such quantization, the precise average cannot be achieved
(except in particular cases), but some value close to it can be achieved, called quantized consensus.
A number of papers have studied this quantized consensus problem and
various \emph{probabilistic} strategies have been proposed to cause all the agents in a network 
to reach a quantized consensus with probability one (or at least with high probability)
\cite{Schuchman:64,Aysal:10,aysal,vote1,vote,murray,moura,Basar07, Etesami2013}. 
Notwithstanding this, the problem of how to design and analyze \emph{deterministic} algorithms for quantized consensus remains open \cite{Frasca:2009Ave,Cao2013}.

In this paper, we thoroughly analyze the performance of a deterministic distributed averaging algorithm
where the information exchange between neighboring agents is subject to uniform quantization. 
It is shown that in finite time, the algorithm will either cause all $n$ agents to reach 
a quantized consensus where the consensus value is the largest integer not greater than 
the average of their initial values, or will lead all $n$ agents' variables to cycle in a small neighborhood 
around the average, depending on initial conditions. 
In the latter case, it is further shown that the neighborhood can be arbitrarily small 
by adjusting the algorithm's parameters in a distributed manner.

The rest of the paper is organized as follows: in Section~\ref{ch4:sec:LR}  we review the existing literature related to our work. In Section~\ref{ch4:sec:DA} we introduce some preliminaries of distributed averaging. A network model for quantized communications is given in Section~\ref{ch4:sec:QC} . In Section~\ref{ch4:sec:PF}, we formulate the problem considered in this paper and present the equation model of the quantized system.  The design and analysis of the system, including the main results of the paper, are given in Section~\ref{ch4:sec:DAS}. A further discussion is given in Section~\ref{ch4:D}. Section~\ref{ch4:sec:S} provides some simulations supporting our analytic results and  Section~\ref{ch4:sec:C} concludes the paper.

\section{Literature Review}\label{ch4:sec:LR}

Most of the related works for distributed averaging with quantized communication propose either a deterministic algorithm (as our approach in this paper) or a probabilistic one.

There are only a few publications which study deterministic algorithms for quantized consensus.  In \cite{Tao:11} the distributed averaging problem with quantized communication
is formulated as a feedback control design problem
for coding/decoding schemes; the paper characterizes the amount of information needed to be sent
for the agents to reach a consensus and shows that with an appropriate
scaling function and some carefully chosen control gain,
the proposed protocol can solve the distributed averaging problem,
but some spectral properties of the Laplacian matrix of the underlying fixed undirected graph have to be known in advance.  
More sophisticated coding/decoding schemes were proposed in \cite{xieauto11} for time-varying undirected graphs
and in \cite{zhang13} for time-varying directed graphs,
all requiring carefully chosen parameters. Recently a novel dynamic quantizer has been proposed in \cite{Thanou2013} based on  dynamic quantization intervals for coding of the exchanged messages in wireless sensor networks leading to asymptotic convergence to consensus.
In \cite{murrayacc} a biologically inspired algorithm was proposed which will cause all $n$ agents to
reach some consensus with arbitrary precision, but at the cost of not preserving the desired average.
Control performance of logarithmic quantizers was studied in \cite{carliauto08} and
quantization effects were considered in \cite{Nedic:2009Ond}.
A deterministic algorithm of the same form as in this paper
has been only partially analyzed in \cite{Frasca:2009Ave} where the authors have approximated the system
by a probabilistic model and left the design of the weights as an open problem.

Over the past decade quite a few probabilistic quantized consensus algorithms have been proposed.
The probabilistic quantizer in \cite{aysal}
ensures almost surely consensus at a common but random quantization level for
fixed (strongly connected) directed graphs;
although the expectation of the consensus value equals the
desired average, the deviation of the consensus value from the desired average is not tightly bounded.
An alternative algorithm which gets around this limitation was proposed in \cite{moura}; 
the algorithm adds dither to the agents' variables before quantization
and the mean square error can be made arbitrarily small by tuning the parameters. 
The probabilistic algorithm in \cite{vote1,vote}, called ``interval consensus gossip'', causes all $n$ agents
to reach a consensus in finite time almost surely on the interval in which the average lies,
for time-varying (jointly connected) undirected graphs.
A stochastic quantized gossip algorithm was shown to work properly in \cite{murray}.
The effects of quantized communication on the standard randomized gossip algorithm \cite{boyd052}
were analyzed in \cite{Carli:2010Gos}.
An alternative approach to analyze the quantization effect was introduced in \cite{Schuchman:64,Aysal:10}
which model the effect as noise following certain probability.

Another thread of research has studied quantized consensus with the additional constraint
that the value at each node is an integer.
The probabilistic algorithm in \cite{Basar07}
causes all $n$ agents to reach quantized consensus almost surely
for a fixed (connected) undirected graph;
convergence time of the algorithm was studied in \cite{Etesami2013}, with strong bounds on its expected value.
In \cite{cai} a probabilistic algorithm was proposed to solve the quantized consensus problem
for fixed (strongly connected) directed graphs using the idea of ``surplus''.

We should note that, in addition, our work in this paper is also related to the literature on the problem of
load balancing \cite{rao,subramanian,ghosh1}.

\section{Distributed Averaging}\label{ch4:sec:DA}

Consider a group of $n > 1$ agents labeled $1$ to $n$. 
Each agent $i$ has control over a real-valued scalar quantity $x_i$ called an {\em agreement variable} 
which the agent is able to update its value from time to time.
Agents may only communicate with their ``neighbors''. 
Agent $j$ is a {\em neighbor} of agent $i$ if $(i,j)\in\mathcal{E}$ is an edge in a given 
simple, undirected $n$-vertex graph $\mathbb{G}=(\mathcal{V},\mathcal{E})$ 
where $\mathcal{V}=\{1,2,\ldots,n\}$ is the vertex set and 
$\mathcal{E}$ is the edge set. 
We assume that the graph $\mathbb{G}$ is connected and does not change over time. 
Initially each agent $i$ has a real number $x_i(0)$. Let $$x_{ave}(k)=\frac{1}{n}\sum _{i\in \mathcal{V}}x_{i}(k),$$
be the average of values of all agreement variables in the network, we will refer to $x_{ave}(0)$ simply as $x_{ave}$. 
The purpose of the distributed averaging
problem is to devise an algorithm which enables all $n$ agents to asymptotically determine 
in a decentralized manner, the average of the initial values of their scalar variables, 
i.e.,
$$\lim _{k\rightarrow \infty} x_i(k)=x_{ave}.$$

A well studied approach to the
problem is for each agent to use a linear iterative update rule of the form
\begin{equation}\label{ch4:stateEq}
x_i(k+1)=w_{ii}x_i(k)+\sum_{j\in \mathcal{N}_i}w_{ij}x_j(k) , \ \ \forall i\in \mathcal{V},
\end{equation}
where $k$ is a discrete time index, 
$\mathcal{N}_i$ is the set of neighbors of agent $i$ and the $w_{ij}$ are real-valued weights 
to be designed.
In \cite{boyd04} several methods are proposed
for choosing the weights $w_{ij}$ with the goal of obtaining algorithms with improved convergence rates. One
particular choice, which defines what has come to be known as the Metropolis algorithm, requires
only local information to define the $w_{ij}$ \cite{metro,Xiao05distributedaverage}.
The corresponding Metropolis weights are chosen as follows:
\begin{eqnarray*}
w_{ij} &=& \frac{1}{\text{max}\{d_i,d_j\}+1},  \ \ \forall (i,j)\in \mathcal{E},\\
w_{ii} &=& 1-\sum _{j\in \mathcal{N}_i}w_{ij}, \ \ \forall i\in \mathcal{V},
\end{eqnarray*}
where $d_i$ is the degree of agent $i$.

Eq.~\eqref{ch4:stateEq} can be written in a matrix form as 
$$\mathbf{x}(k+1)=W\mathbf{x}(k),$$
where $\mathbf{x}(k)$ is the state vector of agreement values whose $i$th element equals $x_i(k)$, 
and $W$ is the weight matrix whose $ij$th entry equals $w_{ij}$.
It should be clear that $w_{ij}>0$ if $(i,j)\in \mathcal{E}$ and $w_{ij}=0$ otherwise. 
A necessary and sufficient condition  for the convergence of Eq.~\eqref{ch4:stateEq} to the desired average 
for any initial values is that $W$ is a doubly stochastic matrix 
and all eigenvalues of $W$, with the exception of a single eigenvalue of value $1$, have magnitude strictly
less than unity \cite{Xiao04}.
It is easy to verify that the Metropolis weights satisfy this condition. 
Thus the Metropolis weights guarantee the desired convergence, i.e.,
$$\lim _{k\rightarrow \infty}\mathbf{x}(k)=x_{ave}\mathbf{1},$$
where $\mathbf{1}$ is the vector in $\mathbb{R}^n$ whose entries all equal one. 
It is worth noting that since $W$ is doubly stochastic, the summation of all $n$ values of agreement variables is kept constant, so is the average of the variables, namely
$$\mathbf{1}^T\mathbf{x}(k)=\mathbf{1}^T\mathbf{x}(0)=nx_{ave}, \ \forall k.$$

\section{Quantized Communication}\label{ch4:sec:QC}
In a network where links have constraints on the capacity and have limited bandwidth (e.g., digital communication networks), messages cannot have infinite length. However, the distributed averaging algorithm requires sending real (infinite precision) values through these communication links.   Therefore, with digital transmission,  the messages transmitted between neighboring agents will have to be  truncated. If the communication bandwidth was limited, the more the truncation of agents' values,  the higher would be the deviation of agent's value from the desired average consensus $x_{ave}$.

To model the effect of quantized communication, we assume that the links  perform a quantization effect on the values transmitted between agents. The network model is given by Fig.~\ref{ch4:NetModel}.
\begin{figure}
\begin{center}
\includegraphics[scale=0.45]{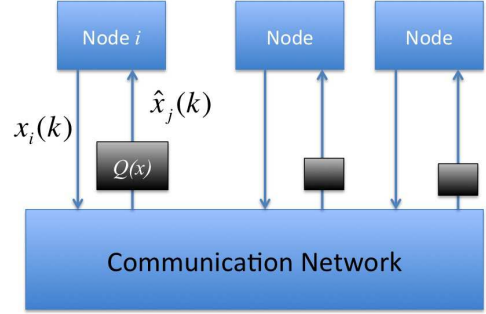}
\caption{The network model for the quantized system.  
} 
\label{ch4:NetModel}
\end{center}
\end{figure}
As we can see from the model, each agent $i$ can have infinite bandwidth to store its latest value $x_i(k)$ and perform computations. However, when agent $i$ sends its value at time $k$ through the communication network,  its neighbors will receive a value $\hat{x}_i(k)$ which is the quantized value of $x_i(k)$. A {\em quantizer} is a function $\mathcal{Q} : \mathbb{R}\rightarrow \mathbb{Z}$ that maps a real value to an integer. Quantizers can be of different forms. We present here  some widely used quantizers in the literature \cite{Nair:2007Fee, Carli:2010Gos, Nedic:2009Ond}:

\begin{enumerate}
\item Truncation quantizer $\mathcal{Q}_t$ which truncates the decimal part of a real number and keeps the integer part:
\begin{align}
\mathcal{Q}_t(x)&=\lfloor x\rfloor .
\end{align}

\item Ceiling quantizer $\mathcal{Q}_c$ which rounds the value to the nearest upper integer:
\begin{align}
\mathcal{Q}_c(x)&=\lceil x\rceil .
\end{align}

\item Rounding quantizer $\mathcal{Q}_r$ which rounds a real number to its nearest integer:
\begin{align}
\mathcal{Q}_r(x)&=
\begin{cases}
\lfloor x\rfloor &  \text{ if } x-\lfloor x\rfloor < 1/2  \\
\lceil x\rceil & \text{ if } x-\lfloor x\rfloor \geq 1/2.
\end{cases}
\end{align}

\item Probabilistic quantizer $\mathcal{Q}_p$ defined as follows: 
\begin{align}
\mathcal{Q}_p(x)=
\begin{cases}
\lfloor x\rfloor & \text{ with probability } \lceil x\rceil - x\\
\lceil x\rceil & \text{ with probability } x-\lfloor x\rfloor.
\end{cases}
\end{align}
\end{enumerate}
In this report we study the effect of the deterministic quantizers ($\mathcal{Q}_t(x)$, $\mathcal{Q}_c(x)$, and $\mathcal{Q}_r(x)$) on the performance of the distributed averaging algorithms by showing the distance that the agents' stored values can deviate from the initial average $x_{ave}$. The quantizers listed before map $\mathbb{R}$ into $\mathbb{Z}$ and have quantization jumps of size 1.  Quantizers having a generic real positive quantization step $\epsilon$ can be simply recovered by a suitable scaling: 
$\mathcal{Q}^{(\epsilon )}(x)=\epsilon \mathcal{Q}(x/\epsilon )$ \cite{Carli:2010Gos}. Thus the results in this report cover these generic quantizers as well.

\section{Problem Formulation}\label{ch4:sec:PF}

Suppose that all $n$ agents adhere to the same update rule of Eq.~\eqref{ch4:stateEq}. Then with a quantizer $\mathcal{Q}(x)$, the network equation would be
\begin{equation} \label{ch4:sys2}
x_i(k+1)=w_{ii} x_i(k) +\sum_{j\in \mathcal{N}_i}w_{ij}\mathcal{Q}(x_j(k)), \ \ \forall i\in \mathcal{V}.
\end{equation}
Simple examples show that this algorithm can cause the system to shift away from the initial average $x_{ave}$.

Since agents know exactly the effect of the quantizer, for the agents not to lose any information caused by quantization, at each iteration $k$ each agent $i$ can send out the quantized value $\mathcal{Q}(x_i(k))$ (instead of sending $x_i(k)$) and store in a \emph{local} scalar $c_i(k)$ the difference between the real value $x_i(k)$ and its quantized version, i.e.,
\begin{align*}
c_i(k)&=x_i(k)-\mathcal{Q}( x_i(k) ).
\end{align*}

Then, the next iteration update of agent $i$ can be modified to be
\begin{equation}
 x_i(k+1)=w_{ii} \mathcal{Q}(x_i(k)) +\sum_{j\in \mathcal{N}_i}w_{ij}\mathcal{Q}( x_j(k) ) +c_i(k), \ \ \forall i\in \mathcal{V}.
\end{equation}
A major difference between this equation and \eqref{ch4:sys2} is that here no information is lost; i.e., the total average is being conserved in the network, as we will show shortly after. The state equation of the system becomes,
\begin{equation}\label{ch4:quantizedstateeq}
\mathbf{x}(k+1)=W\mathcal{Q}\left(\mathbf{x}(k)\right) + \mathbf{x}(k) - \mathcal{Q}\left(\mathbf{x}(k)\right) ,
\end{equation}
where, with a little abuse of notation,  $\mathcal{Q}\left(\mathbf{x}\right)=\left(\mathcal{Q}(x_1), \mathcal{Q}(x_2), \dots ,\mathcal{Q}(x_n) \right)^T$ is the vector quantization operation. For any  $W$ where each column sums to $1$ ($\mathbf{1}^TW=\mathbf{1}^T$ where $\mathbf{1}$ is the vector of all ones), the  total sum of all $n$ agreement
variables does not change over time if agents followed the
protocol of  Eq.~\eqref{ch4:quantizedstateeq}:
\begin{align}
 \mathbf{1}^T\mathbf{x}(k+1)&= \mathbf{1}^T(W\mathcal{Q}\left(\mathbf{x}(k)\right)  + \mathbf{1}^T\mathbf{x}(k) - \mathbf{1}^T\lfloor \mathbf{x}(k)\rfloor \nonumber\\
 &=\mathbf{1}^T\mathcal{Q}\left(\mathbf{x}(k)\right)  + \mathbf{1}^T\mathbf{x}(k) - \mathbf{1}^T\mathcal{Q}\left(\mathbf{x}(k)\right)  \nonumber\\
 &=\mathbf{1}^T\mathbf{x}(k)\nonumber\\
 &=\mathbf{1}^T\mathbf{x}(0)\nonumber\\
 &=nx_{ave},\label{ch4:AveCons}
 \end{align}
 Thus the average is also  conserved ($x_{ave}(k)=x_{ave}, \; \; \forall k$).
Equation \eqref{ch4:quantizedstateeq} would be our model of distributed averaging with deterministic quantized communication where the quantizer can take the form of the truncation $\mathcal{Q}_t$, the ceiling $\mathcal{Q}_c$, or the rounding one $\mathcal{Q}_r$. It is worth noting that the three quantizers can be related by the following equations:
\begin{align}
\mathcal{Q}_r(x)&=\mathcal{Q}_t(x+1/2),\\
\mathcal{Q}_c(x)&=-\mathcal{Q}_t(-x).
\end{align}

Given a model with the ceiling quantizer $\mathcal{Q}_c$ in  \eqref{ch4:quantizedstateeq}, by taking 
 $\mathbf{y}(k)=-\mathbf{x}(k)$, 
 the system evolves as:
\begin{align*}
\mathbf{y}(k+1)&=\mathbf{y}(k)+W\mathcal{Q}_t(\mathbf{y}(k))-\mathcal{Q}_t(\mathbf{y}(k))\\
\mathbf{y}(0)&=-\mathbf{x}(0).
\end{align*}
Therefore, by analyzing the above system which has a truncation quantizer $\mathcal{Q}_t$, we can deduce the performance of $\mathbf{x}(k)$ that satisfies equation \eqref{ch4:quantizedstateeq} with a ceiling quantizer $\mathcal{Q}_c$ because they are related by a simple equation ($\mathbf{y}(k)=-\mathbf{x}(k)$).  

Similarly, given a model with the rounding quantizer $\mathcal{Q}_r$ in  \eqref{ch4:quantizedstateeq}, by taking 
 $\mathbf{y}(k)=\mathbf{x}(k)+\frac{1}{2}\mathbf{1}$, 
 the system evolves as:
\begin{align*}
\mathbf{y}(k+1)&=\mathbf{y}(k)+W\mathcal{Q}_t(\mathbf{y}(k))-\mathcal{Q}_t(\mathbf{y}(k))\\
\mathbf{y}(0)&=\mathbf{x}(0)+\frac{1}{2}\mathbf{1}.
\end{align*}
Therefore, by analyzing the above system which has a truncation quantizer $\mathcal{Q}_t$, we can deduce the performance of $\mathbf{x}(k)$ that satisfies equation \eqref{ch4:quantizedstateeq} with a rounding quantizer $\mathcal{Q}_r$ because they are related by a simple translation equation ($\mathbf{y}(k)=\mathbf{x}(k)+\frac{1}{2}\mathbf{1}$). Therefore the
effects of all these three quantizers are essentially the same.

With this nontrivial observation in mind, we focus on the analysis
of the truncation quantizer only in the rest of this report. The
results can then be easily extended to the case of the other two quantizers.

In the sequel we will fully characterize the  behavior of  system \eqref{ch4:quantizedstateeq} and its convergence properties. 
But first, we have the following definition:
\begin{definition}
A network of $n$ agents reaches quantized consensus if there is an iteration $k_0$ such that 
$$\mathcal{Q}(x_i(k))=\mathcal{Q}(x_j(k)), \;\; \forall i,j \in \mathcal{V},\ \forall k\geq k_0.$$
\end{definition}

\section{Design and Analysis of the System}\label{ch4:sec:DAS}
In this section, we carry out the analysis of the proposed quantized system equation. By considering the truncation quantizer $\mathcal{Q}_t$ in \eqref{ch4:quantizedstateeq}, the system equation becomes:
\begin{equation}\label{ch4:quantizedstateeq2}
\mathbf{x}(k+1)=W\lfloor \mathbf{x}(k)\rfloor + \mathbf{x}(k) - \lfloor \mathbf{x}(k)\rfloor.
\end{equation}
This can be written in a distributed way for every $i\in \mathcal{V}$ as follows:
\begin{align}
x_i(k+1)&=x_i(k)+\sum _{j\in \mathcal{N}_i}w_{ji}\left(\lfloor x_j(k)\rfloor-\lfloor x_i(k)\rfloor \right),\\
&=x_i(k)+\sum _{j\in \mathcal{N}_i}w_{ji}L_{ji}(k),
\end{align}
where 
$$L_{ji}(k)\triangleq \lfloor x_j(k)\rfloor - \lfloor x_i(k)\rfloor=-L_{ij}(k).$$
The non-linearity of the system due to quantization complicates the analysis, and traditional stability analysis of linear systems (such as ergodicity, products of stochastic matrices, etc.) cannot be applied here as the system might not even converge. As demonstrated in the following subsection.

\subsection{Cyclic Example}\label{subsec:CE}
The purpose of the following example is to show that for a ``bad'' weight matrix design, the quantized system can cycle very far from the average. Consider the  two-nodes example of Fig.~\ref{ch4:toynet},
\begin{figure}
\begin{center}
\includegraphics[scale=0.5]{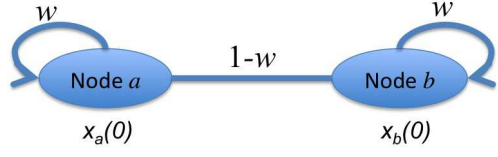}
\caption{Network of two nodes where quantized communication does not converge.  
} 
\label{ch4:toynet}
\end{center}
\end{figure}
suppose that $x_a(0)=\xi$, $x_b(0)=K +\xi$ where $K\in \mathbb{N}$ and $\xi \in (0,1)$. With these initial values, $\lfloor x_a(0)\rfloor =0$, $\lfloor x_b(0)\rfloor =K$, and $x_{ave}=\frac{K}{2}+\xi$. The weight matrix for this two-nodes system is assumed to be a doubly stochastic matrix and is given as follows:
$$W=\left( \begin{array}{cc}
w & 1-w  \\
1-w & w  \end{array} \right),$$
where $w\in (0,1)$. With this weight matrix, \eqref{ch4:AveCons} is satisfied and the average is conserved. In \cite{Frasca:2009Ave}, the authors defined the following metric to measure the performance of the system:
\begin{equation}
d_\infty (W, \mathbf{x}(0) )=\limsup _{k\rightarrow \infty}\frac{1}{\sqrt{n}}||\Delta (k)||,
\end{equation}
where $\Delta (k)$ is a vector having the elements $\Delta _i(k)=x_i(k)-x_{ave}$.
 So the worst cycle (according to this metric), given a doubly stochastic weight matrix, would happen if the nodes toggled their values with every iteration. Let us derive conditions on $W$ for which this could happen. With the quantization, the corresponding system equations are as follows:
\begin{align}
x_a(k+1)&=x_a(k)+(1-w)\times\left(\lfloor x_b(k)\rfloor - \lfloor x_a(k)\rfloor\right)\\
x_b(k+1)&=x_b(k)+(1-w)\times\left(\lfloor x_a(k)\rfloor - \lfloor x_b(k)\rfloor\right).
\end{align}
From the given initial conditions, after one iteration the updated values are $x_a(1)=\xi +(1-w)K$ and $x_b(1)=K+\xi -(1-w)K$. Therefore, the quantized value of the nodes' variables will toggle between $0$ and $K$ if $x_a(1)\in [K, K+1)$ and $x_b(1)\in [0,1)$. By substituting the values of $x_a(1)$ and $x_b(1)$ we get the following  conditions for such a cycle, 
\begin{equation}\label{ch4:conditionscycle}
\begin{cases}
wK>\max \{-\xi,\xi -1\}\\
wK<\min \{\xi, 1-\xi \}.
\end{cases}
\end{equation}
The first condition is always satisfied because $wK>0$. Then, a bad design of $W$ is to have $w<\frac{1}{K}\times \min \{\xi, 1-\xi \}$ because in this case the nodes can cycle\footnote{In case initial values were not known, since $\min \{\xi, 1-\xi \}\leq 1/2$, then, a bad design of $W$ is to have $w<\frac{1}{2K}$ because in this case there might be some initial values that cause large cycles.} with 
\begin{equation}
x_a(k)=
\begin{cases}
\xi &\text{ if $k$ is even}\\
K+\xi  -wK&\text{ if $k$ is odd}
\end{cases}
\ \  \text{and } \ \ 
x_b(k)=
\begin{cases}
K+\xi &\text{ if $k$ is even}\\
wK+\xi  &\text{ if $k$ is odd}.
\end{cases}
\end{equation}
Thus $\Delta _a(k)=\Delta _b(k)=K/2$ if $k$ is even, and so $d_\infty(W, \mathbf{x}(0))=K/2$. The above two-node network result can be extended to regular bipartite graphs where the first set of nodes takes the value $x_a(0)$ and the other set takes the value $x_b(0)$ and all self-weights are equal to $w$.\footnote{In case of hypercube graphs, \cite{Frasca:2009Ave} shows that if the weights in the network have a constant value $1/(d+1)$ where $d=\log n$ is the degree of a node in the hypercube graph, then an upper bound on $d_\infty (W)= \sup _{\mathbf{x}(0)} d_\infty (W, \mathbf{x}(0))$ is the following $d_\infty (W)\leq \frac{\log n}{2}$. Since a hypercube is a regular bipartite graph, then using our results leads to the following lower bound, $d_\infty (W)\geq \frac{\log n}{4}$ (by taking $\xi =0.5$ and $K=(\log n)/2$ to satisfy \eqref{ch4:conditionscycle}).} This would also lead to the following inequality on $d_\infty(W,\mathbf{x}(0))$ with the given initial conditions and weight matrix:
$$d_\infty (W,\mathbf{x}(0))\geq K/2.$$
This shows that a bad design of $W$ on general graphs can make the cycle arbitrarily large. 

\subsection{Weight Assumption}
The system behavior depends of course on the design of the weight matrix. In distributed averaging, it is important to consider weights that can be chosen locally, avoid \emph{bad} design, and guarantee desired convergence properties. We impose the following assumption on $W$ which can be satisfied in a distributed manner. 
 
\begin{ass}\label{ch4:ass1}
The weight matrix in our design has the following properties:
\begin{itemize}
\item $W$ is a symmetric doubly stochastic matrix:
\begin{align*}
w_{ij}=w_{ji}\geq 0 \ \ \forall i,j\in \mathcal{V}\\
\sum _iw_{ij}=\sum _jw_{ij}=1,
\end{align*}
\item Dominant diagonal entries of $W$: 
\begin{equation*}
w_{ii}>1/2 \text{ for all } i\in \mathcal{V},
\end{equation*}
\item Network communication constraint: if $(i,j)\notin \mathcal{E}$, then $w_{ij}=0$,
\item  For any link $(i,j)\in \mathcal{E}$ we have $w_{ij}\in \mathbb{Q}^+$, where $\mathbb{Q}^+$ is the set of rational numbers in the interval $(0,1)$.
\end{itemize}
\end{ass}
These are also sufficient conditions for the linear system \eqref{ch4:stateEq} to converge. The choice of weights being rational numbers is not restrictive because any practical implementation would satisfy this property intrinsically (we use it here to prove
convergence results). The dominant diagonal entries assumption is very important to prevent the system from having large cycles (as in the cyclic example in Section \ref{subsec:CE}).

We now state the main result of this report which will be proved in the following subsections.
\begin{main}\label{ch4:mainR}
Consider the quantized system \eqref{ch4:quantizedstateeq2}. Suppose that Assumption~\ref{ch4:ass1} holds. Then for any initial value $\mathbf{x}(0)$, there is a finite time iteration where either
\begin{enumerate}
\item  the system reaches quantized consensus, or
\item the nodes' values  cycle in a small neighborhood around the average, where the neighborhood can be made arbitrarily small by a decentralized design of the weights (having trade-off with the speed of convergence).
\end{enumerate} 
\end{main}
To highlight the importance of these results, notice that the Main Convergence Result~\ref{ch4:mainR} implies there is an iteration $k_0$ such that $x_i(k)-x_j(k)<1$ for all $i,j\in \mathcal{V}$ for $k\geq k_0$. This gives a constant upper bound  on the metric $d_\infty (W, \mathbf{x}(0))$ independent of initial values, i.e., due to Assumption~\ref{ch4:ass1}, $d_\infty (W, \mathbf{x}(0))\leq 0.5$ on any general graph and for any initial conditions. 
 
\subsection{Cyclic States}\label{ch4:subsec:CS}
We study in this subsection the convergence properties of the system equation \eqref{ch4:quantizedstateeq2} under Assumption~\ref{ch4:ass1}. Let us first show that  due to quantized communication, the states of the agents lie in a discrete set. Since $w_{ij}\in \mathbb{Q}^+$ for any link $(i,j)$, we can write $$w_{ij}=\frac{a_{ij}}{b_{ij}},$$ where $a_{ij}$ and $b_{ij}$ are co-prime  positive  integers. Suppose that $B_i$ is the Least Common Multiple (LCM) of the integers $\{b_{ij}; (i,j)\in \mathcal{E},j\in \mathcal{N}_i\}$. Let $c_i(k)= x_i(k)- \lfloor x_i(k)\rfloor$; then we have $c_i(k)\in [0,1)$. Let us see how $c_i(k)$ evolves:
\begin{align}
c_i(k)&=x_i(k)- \lfloor x_i(k)\rfloor \nonumber\\
&=x_i(k-1)+\sum _{j\in \mathcal{N}_i}w_{ij}\times \left(\lfloor x_j(k-1)\rfloor - \lfloor x_i(k-1)\rfloor\right)\nonumber\\
&\hspace*{0.5cm}- \lfloor x_i(k)\rfloor \nonumber\\
&=\lfloor x_i(k-1)\rfloor + c_i(k-1)\nonumber\\
&\hspace*{0.5cm}+\sum _{j\in \mathcal{N}_i}\frac{a_{ij}}{b_{ij}}\times \left(\lfloor x_j(k-1)\rfloor - \lfloor x_i(k-1)\rfloor\right)- \lfloor x_i(k)\rfloor \nonumber\\
&=c_i(k-1) +\frac{Z(k)}{B_i},
\end{align}
 where $Z(k)\in \mathbb{Z}$ is an integer.  Then with a simple recursion, we can see that for any iteration $k$ we have:
 \begin{equation}\label{ch4:StepLevels}
 c_i(k)=c_i(0)+\frac{\tilde{Z}(k)}{B_i},
 \end{equation}
 where $\tilde{Z}(k)\in \mathbb{Z}$. Since $c_i(k)\in [0,1)$, this equation shows that the states of the nodes are  quantized, and the decimal part can have maximum $B_i$ quantization levels. 

We now give the following definition,
\begin{definition}
The  quantized system \eqref{ch4:quantizedstateeq2}	is cyclic if there exists a positive integer $P$ and a finite time $k_0$ such that
 $$\mathbf{x}(k+P)= \mathbf{x}(k) \; \; \; \forall k\geq k_0,$$ 
 where $P$ is the cycle period.
\end{definition}

\begin{prop}\label{ch4:PropCycle}
Suppose Assumption~\ref{ch4:ass1} holds. Then, the quantized system \eqref{ch4:quantizedstateeq2}, starting from any initial value $\mathbf{x}(0)$,  is cyclic.  
\end{prop}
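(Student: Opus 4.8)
The plan is to show that the state trajectory is confined to a finite set, so that by the pigeonhole principle some state must recur, and then to argue that once a state recurs the whole trajectory is eventually periodic. The confinement has two ingredients. First, by Eq.~\eqref{ch4:StepLevels}, the fractional part $c_i(k)$ of each coordinate lives in the finite set $\{c_i(0) + m/B_i : m \in \mathbb{Z}\} \cap [0,1)$, which has at most $B_i$ elements; so the fractional parts take only finitely many values over all time. Second, I need that the integer parts $\lfloor x_i(k)\rfloor$ also stay bounded. For this I would use that $W$ is doubly stochastic and the average is conserved (Eq.~\eqref{ch4:AveCons}): since $\mathbf{x}(k+1) = W\lfloor\mathbf{x}(k)\rfloor + \mathbf{x}(k) - \lfloor\mathbf{x}(k)\rfloor$ and $W$ is a convex-combination operator, the maximum coordinate of $\mathbf{x}(k)$ is non-increasing up to an additive constant coming from the fractional-part term (which is bounded by $1$), and similarly the minimum is non-decreasing up to a bounded perturbation; combined with conservation of the sum, this pins $\max_i x_i(k) - \min_i x_i(k)$ to a bounded range, hence all $\lfloor x_i(k)\rfloor$ lie in a finite interval of integers. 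Putting the two ingredients together, the vector $\mathbf{x}(k) = \lfloor\mathbf{x}(k)\rfloor + \mathbf{c}(k)$ ranges over a finite set $S$ for all $k \geq 0$.

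Next I would invoke determinism. The map $F(\mathbf{x}) = W\lfloor\mathbf{x}\rfloor + \mathbf{x} - \lfloor\mathbf{x}\rfloor$ is a well-defined function, and $\mathbf{x}(k+1) = F(\mathbf{x}(k))$. Since $\{\mathbf{x}(k)\}_{k\geq 0} \subseteq S$ and $S$ is finite, there exist $k_1 < k_2$ with $\mathbf{x}(k_1) = \mathbf{x}(k_2)$. Set $P = k_2 - k_1 > 0$. By induction on $k$ using $\mathbf{x}(k+1) = F(\mathbf{x}(k))$, it follows that $\mathbf{x}(k+P) = \mathbf{x}(k)$ for all $k \geq k_1$. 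Taking $k_0 = k_1$ gives exactly the definition of the system being cyclic, which is what is to be proved.

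The main obstacle is establishing the boundedness of the integer parts rigorously — i.e.\ the claim that $\max_i x_i(k) - \min_i x_i(k)$ stays bounded. The subtlety is that the fractional-part term $\mathbf{x}(k) - \lfloor\mathbf{x}(k)\rfloor$ is \emph{not} a contraction and can push coordinates around; one must check it cannot cause the spread to grow without bound. Here the dominant-diagonal hypothesis $w_{ii} > 1/2$ from Assumption~\ref{ch4:ass1} is what prevents the kind of runaway behavior seen in the cyclic example of Section~\ref{subsec:CE} (where $w$ could be taken arbitrarily small). Concretely, I would track the quantity $M(k) = \max_i \lfloor x_i(k)\rfloor$ and $m(k) = \min_i \lfloor x_i(k)\rfloor$ and show, using $w_{ii} > 1/2$ together with $\lfloor x_i(k)\rfloor \le x_i(k) < \lfloor x_i(k)\rfloor + 1$, that $M(k)$ cannot exceed $M(k-1)$ (and $m(k)$ cannot fall below $m(k-1)$), so that after the first step the integer parts are trapped in $[m(1), M(1)]$. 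Once this monotonicity/trapping claim is in hand, the finiteness of $S$ and the pigeonhole-plus-determinism argument are routine.
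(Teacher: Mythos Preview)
Your plan is exactly the paper's: bound the fractional parts via Eq.~\eqref{ch4:StepLevels}, bound the integer parts by showing $M(k)=\max_i\lfloor x_i(k)\rfloor$ is non-increasing and $m(k)=\min_i\lfloor x_i(k)\rfloor$ is non-decreasing, conclude the state lives in a finite set, and finish by determinism plus pigeonhole.

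Two small corrections. First, discard the heuristic in your first paragraph (``non-increasing up to an additive constant\ldots combined with conservation of the sum''): an additive slack per step would allow unbounded drift, and sum-conservation does not control the spread. What you actually need, and correctly state in your last paragraph, is the \emph{exact} monotonicity $M(k+1)\le M(k)$. Second, this monotonicity does \emph{not} require the dominant-diagonal hypothesis $w_{ii}>1/2$; row-stochasticity alone gives
\[
x_i(k+1)=c_i(k)+\lfloor x_i(k)\rfloor+\sum_{j\in\mathcal N_i}w_{ij}\bigl(\lfloor x_j(k)\rfloor-\lfloor x_i(k)\rfloor\bigr)\le c_i(k)+M(k)<M(k)+1,
\]
hence $\lfloor x_i(k+1)\rfloor\le M(k)$. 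Indeed, in the ``bad'' example of Section~\ref{subsec:CE} one still has $M(k)\equiv K$ and $m(k)\equiv 0$; the hypothesis $w_{ii}>1/2$ is what later forces the cycle to be \emph{small}, not what makes the state space finite here. The part of Assumption~\ref{ch4:ass1} that is genuinely used in this proposition is the rationality of the weights, which underlies Eq.~\eqref{ch4:StepLevels}.
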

\begin{proof}
Let $m(k)$ and $M(k)$ be defined as follows:
\begin{equation}\label{ch4:minmax}
m(k)\triangleq \min_{i\in \mathcal{V}} \lfloor x_i(k)\rfloor,\;\; M(k)\triangleq \max_{i\in \mathcal{V}}\lfloor x_i(k)\rfloor.
\end{equation}
  Notice that for any $k$, we have
\begin{align*}
x_i(k+1)&=x_i(k)+\sum _{j\in \mathcal{N}_i}w_{ji}L_{ji}\\
&\leq c_i(k)+\lfloor x_i(k)\rfloor +\left(\sum _{j\in \mathcal{N}_i}w_{ji}\right)\left(M(k)-\lfloor x_i(k)\rfloor\right)\\
&\leq c_i(k)+M(k),
\end{align*}
 from which it follows that $\lfloor x_i(k+1)\rfloor\leq M(k)$, and hence $M(k+1)\leq M(k)$. By a simple recursion we can see that the maximum cannot increase, $M(k)\leq M(0)$. Similarly, we have $m(k)\geq m(0)$. As a result, $\lfloor x_i(k)\rfloor\in \{m(0), m(0)+1, \dots ,M(0)-1, M(0)\}$ is a finite set. Moreover, from equation  \eqref{ch4:StepLevels}, $c_i(k)$ belongs to a finite set that can have at most $B_i$ elements. Since $x_i(k)= \lfloor x_i(k)\rfloor + c_i(k)$, and each of the elements in the sum belongs to a finite set,  $x_i(k)$ belongs to a finite set as well. But from equation \eqref{ch4:quantizedstateeq2}, we have $\mathbf{x}(k+1)=f\left(\mathbf{x}(k)\right)$ where the function $f(.)$ is a deterministic function of the input state at iteration $k$, so the system is a deterministic finite state automata. States of deterministic automata enter a cycle in finite time \cite{Reger:2002Cyc}, and therefore the system is cyclic.  
\end{proof}

\subsection{Lyapunov Stability}

In this subsection, we will study the stability of the above system using a Lyapunov function. 
Assumption~\ref{ch4:ass1} and Eq.~\eqref{ch4:StepLevels} imply that there exists a fixed\footnote{By `fixed' we mean that the value is independent of time and it only depends on initial values and the network structure.} strictly positive constant $\gamma >0$ such that for any $i$ and any iteration $k$ the following hold:
\begin{align}
&\text{If } c_i(k)>\left(\sum _{j\in \mathcal{N}_i}w_{ij}\right)\text{, then  } \ \ c_i(k)-\sum _{j\in \mathcal{N}_i}w_{ij}\geq 2\gamma ,\label{ch4:gamma1}\\
&\text{If } \bar{c}_i(k)>\left(\sum _{j\in \mathcal{N}_i}w_{ij}\right)\text{, then } \ \  \bar{c}_i(k)-\sum _{j\in \mathcal{N}_i}w_{ij}\geq 2\gamma ,\label{ch4:gamma2}\\
&\bar{c}_i(k)\geq 2\gamma ,\label{ch4:gamma3}\\
&\frac{1}{2}-\sum _{j\in \mathcal{N}_i}w_{ij}\geq 2\gamma,\label{ch4:weightgamma}
\end{align}
where  $\bar{c}_i(k)=1-c_i(k)$.

{\bf Remark:} \emph{Equations \eqref{ch4:gamma1}-\eqref{ch4:gamma3} do not hold for the simple linear model of \eqref{ch4:stateEq}. For example, consider a linear model that does not reach consensus in finite time, and suppose that $x_{ave} \in \mathbb{Z}$. Then, since $\lim _{k\rightarrow \infty}x_i(k)=x_{ave}$, we have that $c_i(k)$ can be as close to $1$ as desired, and hence we cannot bound $\bar{c}_i(k)$ by a fixed positive value.}

Let $m(k)$ and $M(k)$ be defined as in \eqref{ch4:minmax}. Let us define the following set:
\begin{equation}
S_k=\{\mathbf{y}\in \mathbb{R}^n, |y_i-m(k)-1|\leq \alpha _i\},
\end{equation}
where $\alpha _i=1-w_{ii}+\gamma$. Note that 
\begin{align*}
\alpha _i&=1-w_{ii}+\gamma \\
&=\sum _{j\in \mathcal{N}_i}w_{ij}+\gamma \\
&\leq \frac{1}{2}-\gamma,
\end{align*}
where the last inequality is due to Eq.~\eqref{ch4:weightgamma}, and thus $\alpha _i \in (0,1/2)$.
The set $S_k$ depends on the iteration $k$ because the value $m$ does. Since according to the system \eqref{ch4:quantizedstateeq2}, $m(k)$ cannot decrease and $M(k)$ cannot increase as indicated earlier, then $S_k$ can only belong to one of the $M(0)-m(0)$ possible compact sets at each iteration $k$. Furthermore, if $S_k$ changes to a different compact set due to an increase in $m$, it cannot go back to the old one as $m$ cannot decrease. Additionally, if $\mathbf{x}(k)\in S_k$, then it is an interior point of the set $S_k$ and not on the boundary because suppose $|x_i(k)-m(k)-1|=\alpha _i$, then either $c_i(k)=\alpha _i =\sum _{j\in \mathcal{N}_i}w_{ij}+\gamma$ which contradicts \eqref{ch4:gamma1} or $\bar{c}_i(k)=\alpha _i =\sum _{j\in \mathcal{N}_i}w_{ij}+\gamma$ which contradicts \eqref{ch4:gamma2}.

Let us define the following candidate Lyapunov function:
\begin{align}
V(k)&=d(\mathbf{x}(k),S_k)\nonumber\\
&=\min _{\mathbf{y}\in S_k}||\mathbf{y}-\mathbf{x}(k)||_1\nonumber\\
&=\min _{\mathbf{y}\in S_k}\sum _{i\in \mathcal{V}}|y_i-x_i(k)|
\end{align}
By minimizing along each component of $\mathbf{y}$ independently, we get
$$V(k)=\sum _i{\max \{|x_i(k)-m(k)-1|-\alpha _i,0\}}.$$

\begin{figure}
\begin{center}
\includegraphics[scale=0.75]{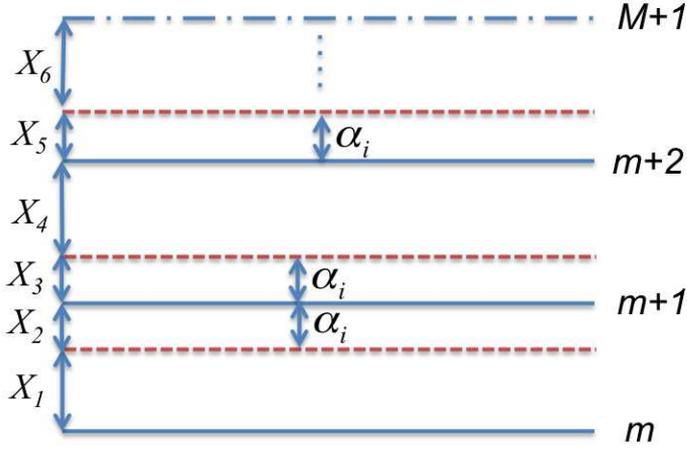}
\caption{Dividing the nodes into sets according to their local values.
} 
\label{ch4:sets}
\end{center}
\end{figure}
Let us determine the change in the proposed candidate Lyapunov function. 
In order to understand the evolution of $\nabla V_k=V(k+1)-V(k)$, we group the nodes  depending on their values at iteration $k$ into 6 sets, $X_1(k)$, $X_2(k)$, $X_3(k)$, $X_4(k)$, $X_5(k)$, and $X_6(k)$ (see Fig.~\ref{ch4:sets}):
\begin{itemize}
\item Node $i\in X_1(k)$ if  \;  $m(k)\leq x_i(k)<m(k)+1-\alpha _i$,
\item Node $i\in X_2(k)$ if \; $m(k)+1-\alpha _i\leq x_i(k)<m(k)+1$,
\item Node $i\in X_3(k)$ if \; $m(k)+1\leq x_i(k)\leq m(k)+1+\alpha _i$,
\item Node $i\in X_4(k)$ if  \; $m(k)+1+\alpha _i< x_i(k)<m(k)+2$,
\item Node $i\in X_5(k)$ if \; $m(k)+2\leq x_i(k)<m(k)+2+\alpha _i$,
\item Node $i\in X_6(k)$ if \; $m(k)+2+\alpha _i\leq x_i(k)$.
\end{itemize}
For simplicity we will drop the index $k$ in the notation of the sets and $m(k)$ when there is no confusion. To have better insights about these sets, we note that if $X_6$ becomes empty at a given iteration, then the set remains empty, i.e., 
\begin{lemma} \label{ch4:lem:empty}
If $X_6(k_0)=\phi$, then $X_6(k)=\phi$ for all $k\geq k_0$.
\end{lemma}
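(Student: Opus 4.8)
The plan is to prove the lemma by induction on $k \ge k_0$, with inductive hypothesis $X_6(k) = \phi$; the base case is the assumption. For the inductive step I would fix an arbitrary node $i$ and show $x_i(k+1) < m(k+1) + 2 + \alpha_i$, i.e.\ that $i \notin X_6(k+1)$; since $i$ is arbitrary this gives $X_6(k+1) = \phi$.

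The first step is to turn $X_6(k)=\phi$ into a bound on the spread of the floors at time $k$. Emptiness of $X_6(k)$ means $x_\ell(k) < m(k) + 2 + \alpha_\ell$ for every node $\ell$, and since $\alpha_\ell \in (0,1/2)$ under Assumption~\ref{ch4:ass1} we get $\lfloor x_\ell(k)\rfloor \le m(k)+2$ for all $\ell$, hence $M(k) \le m(k)+2$, so every floor at time $k$ lies in $\{m(k),m(k)+1,m(k)+2\}$.

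The second step is the one-step estimate. Fix $i$ and put $p = \lfloor x_i(k)\rfloor - m(k) \in \{0,1,2\}$. For each neighbour $j$, $L_{ji}(k) = \lfloor x_j(k)\rfloor - \lfloor x_i(k)\rfloor \le M(k) - (m(k)+p) \le 2-p$; since the weights are nonnegative and $\sum_{j\in\mathcal{N}_i} w_{ji} = 1-w_{ii}$, the update rule gives $x_i(k+1) \le x_i(k) + (2-p)(1-w_{ii})$. If $p=2$ this already gives $x_i(k+1)\le x_i(k) < m(k)+2+\alpha_i$ because $i\notin X_6(k)$. If $p\in\{0,1\}$ I write $x_i(k) = m(k)+p+c_i(k)$ with $c_i(k)<1$ and check the elementary inequality $p+1+(2-p)(1-w_{ii}) \le 2+\alpha_i$ using $\alpha_i = 1-w_{ii}+\gamma$ and $\gamma>0$; combined with $c_i(k)<1$ this yields $x_i(k+1) < m(k)+2+\alpha_i$. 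Finally, $m$ is non-decreasing (shown in the proof of Proposition~\ref{ch4:PropCycle}), so $m(k)+2+\alpha_i \le m(k+1)+2+\alpha_i$, and thus $i\notin X_6(k+1)$.

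There is no real obstacle here, only one point that needs care: the bound $M(k)\le m(k)+2$, which is what caps the one-step increments, relies crucially on $\alpha_i<1/2$; without it a node with floor $m(k)$ could a priori overshoot $m(k)+2+\alpha_i$ in a single step. The other thing to keep track of is strict versus non-strict inequalities — the strictness of $x_i(k+1) < m(k)+2+\alpha_i$ comes from $c_i(k)<1$ (and, when $p=2$, from $i\notin X_6(k)$), while the $\gamma$ appearing in $\alpha_i$ leaves slack in the accompanying arithmetic inequality.
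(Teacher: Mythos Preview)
Your proof is correct and follows essentially the same approach as the paper: both deduce from $X_6(k)=\phi$ that all floors lie in $\{m(k),m(k)+1,m(k)+2\}$, then do a three-case analysis on $\lfloor x_i(k)\rfloor$ to bound $x_i(k+1)<m(k)+2+\alpha_i$. You are in fact slightly more careful than the paper in making the induction explicit and in invoking $m(k+1)\ge m(k)$ to pass from a bound relative to $m(k)$ to membership in $X_6(k+1)$, which the paper leaves implicit.
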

\begin{proof}
If a node $i\notin X_6(k)$, then $\lfloor x_i(k) \rfloor \in \{m, m+1, m+2\}$. So for any node $i$, 
\begin{align*}
x_i(k+1)&=x_i(k)+\sum _{j\in \mathcal{N}_i}w_{ij}L_{ji}\\
&< m+2+\alpha _i,
\end{align*}
where the last equality is due to three possibilities, 
\begin{itemize}
\item if $\lfloor x_i(k)\rfloor=m+2$, then $L_{ji}\leq 0$ for every $j\in \mathcal{N}_i$, and $x_i(k)<m+2+\alpha _i$ since $i\in X_5$ in this case;
\item if $\lfloor x_i(k)\rfloor=m+1$, then $\sum _{j\in \mathcal{N}_i}w_{ij}L_{ji}\leq \sum _{j\in \mathcal{N}_i \cap X_5}w_{ij}\leq \alpha _i$, and $x_i(k)<m+2$  in this case;
\item if $\lfloor x_i(k)\rfloor=m$, then $\sum _{j\in \mathcal{N}_i}w_{ij}L_{ji}\leq \sum _{j\in \mathcal{N}_i}w_{ij}\times 2\leq 2\alpha _i$, and $x_i(k)<m+1$  in this case.
\end{itemize}
Therefore, since $x_i(k+1)< m+2+\alpha _i$, then $i\notin X_6(k+1)$ from the definition of the sets and this ends the proof.
\end{proof}
Note that by a similar reasoning as in Lemma \ref{ch4:lem:empty}, if $\{X_5, X_6\}$ got empty, then it remains empty during all further iterations, and if $\{X_4, X_5, X_6\}$ got empty it remains empty too.

With every iteration, nodes can change their sets. Note that any node can jump in one iteration to a higher set, but the other way around  is not always possible. For example, a node at iteration $k$ in $X_1$ can jump at iteration $k+1$ to $X_6$, but no node outside $X_1$ can get back to it as we will show next.
\begin{lemma}\label{ch4:lem:jump}
If $i\notin X_1(k_0)$, then $i\notin X_1(k)$ for all $k\geq k_0$.
\end{lemma}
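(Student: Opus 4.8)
The plan is to argue by induction on $k\ge k_0$, so it suffices to prove the one-step statement: if $i\notin X_1(k)$ then $i\notin X_1(k+1)$. The first step is to record a convenient reformulation of membership in $X_1$: since $x_i(k)\ge\lfloor x_i(k)\rfloor\ge m(k)$ by the definition of $m(k)$, the condition $m(k)\le x_i(k)$ is automatic, so $i\in X_1(k)$ is equivalent to $x_i(k)<m(k)+1-\alpha_i$. Hence $i\notin X_1(k)$ means precisely $x_i(k)\ge m(k)+1-\alpha_i$, and what has to be shown is $x_i(k+1)\ge m(k+1)+1-\alpha_i$.

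I would first dispose of the generic case $m(k+1)=m(k)$. Write $m=m(k)$ and $\ell=\lfloor x_i(k)\rfloor$, and use the update $x_i(k+1)=x_i(k)+\sum_{j\in\mathcal{N}_i}w_{ji}\bigl(\lfloor x_j(k)\rfloor-\ell\bigr)$ together with $\sum_{j\in\mathcal{N}_i}w_{ji}=1-w_{ii}$; then split on $\ell$. If $\ell=m$, every summand $\lfloor x_j(k)\rfloor-\ell$ is nonnegative, so $x_i(k+1)\ge x_i(k)\ge m+1-\alpha_i$. If $\ell\ge m+1$, bound each summand below by $\lfloor x_j(k)\rfloor-\ell\ge m-\ell$ to obtain $x_i(k+1)\ge \ell+c_i(k)+(1-w_{ii})(m-\ell)=w_{ii}\ell+(1-w_{ii})m+c_i(k)\ge w_{ii}(m+1)+(1-w_{ii})m=m+w_{ii}$, which is strictly larger than $m+1-\alpha_i$ because $1-\alpha_i=w_{ii}-\gamma$ with $\gamma>0$. (The dominant-diagonal hypothesis $w_{ii}>1/2$ is what makes $\alpha_i\in(0,1/2)$, so that $X_1$ is a genuine proper subinterval of $[m,m+1)$; the monotonicity estimate itself needs only $\gamma\ge0$ and $\sum_{j\in\mathcal{N}_i}w_{ji}=1-w_{ii}$.) In both cases $x_i(k+1)\ge m+1-\alpha_i=m(k+1)+1-\alpha_i$, so $i\notin X_1(k+1)$ and the induction closes.

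The one delicate point --- and where I expect the real care to be needed --- is a step at which $m(k)$ strictly increases, since then the membership threshold for $X_1(k+1)$ is $m(k+1)+1-\alpha_i\ge m(k)+2-\alpha_i$, which the bounds above need not reach. I would deal with this by exploiting that $m(k)$ is nondecreasing and bounded above by $M(0)$, hence eventually equal to a constant $m^{\star}$; from that iteration onward the two-case estimate applies verbatim, and this is exactly the regime of the eventual cyclic behaviour of Proposition~\ref{ch4:PropCycle} (equivalently, one may read $X_1(k),\dots,X_6(k)$ as defined relative to the stabilized value $m^{\star}$). Aside from this bookkeeping about $m$, the proof reduces to the short two-case computation above.
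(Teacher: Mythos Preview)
Your one-step computation under $m(k+1)=m(k)$ is essentially the paper's proof: both split on whether $\lfloor x_i(k)\rfloor=m(k)$ (your $\ell=m$, the paper's ``$i\in X_2(k)$'') or $\lfloor x_i(k)\rfloor\ge m(k)+1$ (your $\ell\ge m+1$, the paper's ``$L_i^k\ge1$''), and both reach $x_i(k+1)\ge m(k)+1-\alpha_i$ via the same lower bound $m(k)+c_i(k)+w_{ii}\bigl(\lfloor x_i(k)\rfloor-m(k)\bigr)$, using $w_{ii}\ge 1-\alpha_i$ in the second case.

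You are right to flag the step where $m$ strictly increases as delicate, and it is worth noting that the paper's own proof does not address it either: it derives $x_i(k+1)\ge m(k)+1-\alpha_i$ and immediately concludes $i\notin X_1(k+1)$, which tacitly uses $m(k+1)=m(k)$. Your proposed workaround --- pass to the eventually stabilized value $m^\star$ --- does not establish the lemma as literally stated for all $k\ge k_0$; at a step where $m$ jumps, the threshold defining $X_1(k+1)$ shifts up by at least one unit, and nothing in the two-case estimate controls that. However, this causes no trouble for the paper: the lemma is invoked only inside the Lyapunov analysis (Lemma~\ref{ch4:Vnegative}), which is explicitly conditioned on $m(k+1)=m(k)$, so the one-step version is exactly what is needed there. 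The cleanest reading is to regard the lemma as a one-step statement under the standing hypothesis $m(k+1)=m(k)$, after which your argument and the paper's coincide and are complete.
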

\begin{proof}
Let us define $L_i^k$ be the level of node $i$ at iteration $k$, i.e., $L_i^k=\lfloor x_i(k)\rfloor -m(k)$.  Then,
\begin{align*}
x_i(k+1)&=x_i(k)+\sum _{j\in \mathcal{N}_i}w_{ji}L_{ji}\\
&\geq c_i(k)+\lfloor x_i(k)\rfloor +(\sum _{j\in \mathcal{N}_i}w_{ji})(m(k)-\lfloor x_i(k)\rfloor)\\
&= c_i(k)+L_i^k+m(k)+(\sum _{j\in \mathcal{N}_i}w_{ji})(-L_i^k)\\
&=  m(k)+c_i(k)+w_{ii}L_i^k\\
&\geq m(k)+1-\alpha _i ,
\end{align*}
and $i\notin X_1(k+1)$. The last inequality is due to two possibilities, 
\begin{itemize}
\item if $i\in X_2(k)$ then $L_i^k=0$, and  $m(k)+c_i(k)=x_i(k)\geq m(k)+1-\alpha _i$,
\item otherwise $L_i^k\geq 1$, so $m(k)+c_i(k)+w_{ii}L_i^k\geq m(k)+w_{ii}\geq m(k)+1-\alpha _i$.
\end{itemize}
\end{proof}
 Therefore, due to Lemma \ref{ch4:lem:jump} the increase $ V(k)$ is due to nodes changing to a higher set. However, any node changing its set to a higher one, should have neighbors in the higher sets that cause $V(k)$ to decrease by at least the same amount. To make this a formal argument we give the following lemma:
\begin{lemma}\label{ch4:Vnegative}
Consider the quantized system \eqref{ch4:quantizedstateeq2}. Suppose that Assumption~\ref{ch4:ass1} holds. If $m(k+1)=m(k)$, we have $$\nabla V_k \leq 0.$$
\end{lemma}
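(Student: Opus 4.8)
The plan is to use the hypothesis $m(k+1)=m(k)=:m$ to note that $S_k=S_{k+1}$, so the \emph{same} separable penalty $\phi_i(x):=\max\{|x-m-1|-\alpha_i,\,0\}$ defines both $V(k)$ and $V(k+1)$; hence $\nabla V_k=\sum_{i\in\mathcal V}\delta_i$ with $\delta_i:=\phi_i(x_i(k+1))-\phi_i(x_i(k))$. I would rewrite the update \eqref{ch4:quantizedstateeq2} edgewise as $x_i(k+1)=x_i(k)+\sum_{j\in\mathcal N_i}f_{ij}$, where $f_{ij}:=w_{ij}(\lfloor x_j(k)\rfloor-\lfloor x_i(k)\rfloor)$, and record the one structural fact everything hinges on: by symmetry of $W$ (Assumption~\ref{ch4:ass1}) the flows are antisymmetric, $f_{ij}=-f_{ji}$, and $f_{ij}>0$ only on edges whose endpoint $j$ sits at a strictly higher integer level than $i$. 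I would also collect the elementary consequences of Assumption~\ref{ch4:ass1} and \eqref{ch4:StepLevels} that keep the boundary terms under control: $1-w_{ii}=\alpha_i-\gamma$, $\alpha_i\in(0,1/2)$, the gap inequalities \eqref{ch4:gamma1}--\eqref{ch4:weightgamma}, and Lemmas~\ref{ch4:lem:empty} and \ref{ch4:lem:jump}. Finally I would split $V=V^-+V^+$ into a below-band part $V^-=\sum_{i\in X_1}(m+1-\alpha_i-x_i)$ and an above-band part $V^+=\sum_{i\in X_4\cup X_5\cup X_6}(x_i-m-1-\alpha_i)$.

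The first, routine half is to show $V^-(k+1)\le V^-(k)$ unconditionally. Using $f_{ij}\ge -w_{ij}L_i^k$ on each edge together with $\sum_{j\in\mathcal N_i}w_{ij}=\alpha_i-\gamma<1/2$, a node at level $L_i^k\ge 1$ satisfies $x_i(k+1)\ge m+L_i^k(1-\alpha_i+\gamma)>m+1-\alpha_i$, so it cannot lie in $X_1$ at time $k+1$; and a level-$0$ node has $x_i(k+1)\ge x_i(k)$ since every flow into it is nonnegative. Combining with Lemma~\ref{ch4:lem:jump}, the nodes contributing to $V^-(k+1)$ form a subset of $X_1(k)$, each with a weakly larger value, which gives $V^-(k+1)\le V^-(k)$.

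The heart of the argument, and the step I expect to be the main obstacle, is to show that the rise of $V^+$ is paid for, i.e. $V^+(k+1)-V^+(k)\le V^-(k)-V^-(k+1)$. The idea is an amortized conservation argument driven by the antisymmetry of $f$: on the region $\{x>m+1+\alpha_i\}$ the penalty $\phi_i$ has slope exactly $+1$, so for a node staying there over the step $\delta_i=\sum_{j\in\mathcal N_i}f_{ij}$ \emph{exactly}, and summing such terms over an edge pair cancels by $f_{ij}=-f_{ji}$; only edges crossing the band boundary survive. The delicate bookkeeping is to charge each unit by which a positive flow $f_{ij}>0$ pushes a receiver $i$ up into (or further above) its band to the unit it removes from the donor $j$: the facts that validate the charge are that a donor has level $\ge 1$ and, by the level-$\ge1$ bound just derived, never drops below its own lower band edge, so its penalty is non-increasing at rate $\ge f_{ij}$ along that drop; and that a level-$0$ receiver that jumps clear past its band (width $<1/2$) must be pulled there by higher-level neighbors whose compensating losses — or whose slack, covered by the simultaneous drop of $V^-$ — absorb the excess. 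Here the strict gaps \eqref{ch4:gamma1}--\eqref{ch4:gamma3} are exactly what guarantees that no node sits on a band edge, so every inequality is strict in the needed direction. I would organize this as a finite case analysis on $L_i^k\in\{0,1,\ge2\}$, bound $\delta_i$ in each case, sum over $i$, cancel the antisymmetric bulk terms, and check the leftover boundary terms are $\le 0$ using $\alpha_i<1/2$; together with $V^-(k+1)\le V^-(k)$ this gives $\nabla V_k\le 0$. The genuine difficulty is purely the combinatorial accounting: matching every upward push with a distinct, at-least-as-large downward push without double counting when a single donor feeds several receivers simultaneously.
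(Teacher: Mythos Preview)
Your proposal is correct and is essentially the paper's approach: both arguments hinge on writing the update edgewise with antisymmetric increments $f_{ij}=-f_{ji}$, doing a finite case analysis on the position of each node relative to the band $[m+1-\alpha_i,\,m+1+\alpha_i]$, and then matching every upward contribution to $V$ with an at-least-as-large downward contribution at the donor endpoint of the same edge. The paper organizes the bookkeeping by enumerating the six transition types $X_s\to X_t$ and explicitly exhibiting, for each potentially positive $\nabla_iV_k$, the compensating term carried by a neighbor in $X_5\cup X_6$; your $V^-+V^+$ split and the preliminary observation $V^-(k+1)\le V^-(k)$ are a mild repackaging of the same accounting, and the ``combinatorial matching without double counting'' you flag as the crux is exactly what the paper's case-by-case computation carries out.
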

\begin{proof}
We define $\nabla _iV_k$ as follows:
\begin{align}
\nabla _iV_k&\triangleq \max \{|x_i(k+1)-m-1|-\alpha _i,0\}\nonumber\\
&\hspace*{0.75cm}-\max \{|x_i(k)-m-1|-\alpha _i,0\},
\end{align}
from which it is evident that $\nabla V_k=\sum _{i\in \mathcal{V}}\nabla _iV_k.$
Since only nodes moving from a set $X_s$ to a higher set $X_t$ where $t\geq \max \{s,4\}$ can increase $V(k)$ (we will use the expression $X_s\rightarrow X_t$ to denote the transition of a node that belongs to the set $X_s$ at iteration $k$ to the set $X_t$ at iteration $k+1$), then we can enumerate all the possible transitions of nodes that can cause $V(k)$ to increase:
\begin{enumerate}
\item $X_1(k)\rightarrow X_t(k+1) \; , t\geq 4$,
\begin{align*}
\nabla _iV_k&=\max \{|x_i(k+1)-m-1|-\alpha _i,0\}-\max \{|x_i(k)-m-1|-\alpha _i,0\}\\
&=\left( x_i(k+1)-m-1-\alpha _i\right)-\left( 1+m-x_i(k)-\alpha _i\right)\nonumber\\
&=x_i(k)+\sum _{j\in \mathcal{N}_i}w_{ij}\left(\lfloor x_j(k)\rfloor - \lfloor x_i(k)\rfloor\right)-m-1 - m-1+x_i(k)\nonumber\\
&=\sum _{j\in \mathcal{N}_i}w_{ij}L_{ji}-2(m+1-x_i(k))\nonumber \\
&=\sum _{j\in \mathcal{N}_i}w_{ij}L_{ji}-2\bar{c}_i(k)\nonumber \\
&=\sum _{j\in \mathcal{N}_i}w_{ij}L_{ji}-2(\alpha_i(k)-\alpha _i(k)+\bar{c}_i(k))\nonumber\\
&=(\sum _{j\in \mathcal{N}_i\cap \{X_3,X_4\}}w_{ij})+(\sum _{j\in \mathcal{N}_i\cap X_5}w_{ij}\times 2) +(\sum _{j\in \mathcal{N}_i\cap X_6}w_{ij}L_{ji}) \nonumber\\
&\hspace*{0.3cm}-2(\sum _{j\in \mathcal{N}_i}w_{ij}+\gamma +(\bar{c}_i(k)-\alpha _i))\nonumber\\
&\leq \underbrace{(\sum _{j\in \mathcal{N}_i\cap X_6}w_{ij}L_{ji})}_{\geq 0} -4\gamma .
\end{align*}
\item $X_2(k)\rightarrow X_t(k+1) \; , t\geq 4$, and the change in the Lyapunov function due to these nodes  is as follows:
\begin{align*}
\nabla _iV_k&=\max \{|x_i(k+1)-m-1|-\alpha _i,0\}\nonumber\\
&\hspace*{0.5cm}-\max \{|x_i(k)-m-1|-\alpha _i,0\}\\
&=\left( x_i(k+1)-m-1-\alpha _i\right)-0\nonumber\\
&=x_i(k)+ \sum _{j\in \mathcal{N}_i}w_{ij}L_{ji}-m-1-\alpha _i\nonumber\\
&=\sum _{j\in \mathcal{N}_i}w_{ij}L_{ji}-\alpha _i-\bar{c}_{i}(k)\nonumber\\
&=(\sum _{j\in \mathcal{N}_i\cap \{X_3,X_4\}}w_{ij})+(\sum _{j\in \mathcal{N}_i\cap X_5}w_{ij}\times 2) \nonumber\\
&\hspace*{0.5cm}+(\sum _{j\in \mathcal{N}_i\cap X_6}w_{ij}L_{ji}) -\sum _{j\in \mathcal{N}_i}w_{ij}-\gamma -\bar{c}_i(k)\nonumber\\
&\leq \underbrace{(\sum _{j\in \mathcal{N}_i\cap X_5}w_{ij})}_{\geq 0}+\underbrace{(\sum _{j\in \mathcal{N}_i\cap X_6}w_{ij}L_{ji})}_{\geq 0} -2\gamma .
\end{align*}
\item $X_3(k)\rightarrow X_t(k+1) \; , t\geq 4$, then
\begin{align*}
\nabla _iV_k&=x_i(k)+ \sum _{j\in \mathcal{N}_i}w_{ij}L_{ji}-m-1-\alpha _i\nonumber\\
&=\sum _{j\in \mathcal{N}_i}w_{ij}L_{ji}-(\alpha _i-c_{i}(k))\nonumber\\
&=(\sum _{j\in \mathcal{N}_i\cap \{X_1,X_2\}}w_{ij}\times (-1))+(\sum _{j\in \mathcal{N}_i\cap X_5}w_{ij}) \nonumber\\
&\hspace*{0.5cm}+(\sum _{j\in \mathcal{N}_i\cap X_6}w_{ij}L_{ji}) -(\alpha _i-c_{i}(k))\nonumber\\
&\leq \underbrace{(\sum _{j\in \mathcal{N}_i\cap X_5}w_{ij})}_{\geq 0}+\underbrace{(\sum _{j\in \mathcal{N}_i\cap X_6}w_{ij}L_{ji})}_{\geq 0} -\gamma .
\end{align*}
\item $X_4(k)\rightarrow X_t(k+1) \; , t\geq 4$, then
\begin{align*}
\nabla _iV_k&=\sum _{j\in \mathcal{N}_i}w_{ij}L_{ji}\nonumber\\
&\leq \underbrace{\left(\sum _{j\in \mathcal{N}_i\cap X_5}w_{ij}\right)}_{\geq 0}+\underbrace{\left(\sum _{j\in \mathcal{N}_i\cap X_6}w_{ij}L_{ji}\right)}_{\geq 0}.
\end{align*}
\item $X_5(k)\rightarrow X_t(k+1) \; , t\geq 5$, then 
\begin{align*}
\nabla _iV_k&=\sum _{j\in \mathcal{N}_i}w_{ij}L_{ji}\nonumber\\
&= \underbrace{\left(\sum _{j\in \mathcal{N}_i\cap X_6}w_{ij}L_{ji}\right)}_{\geq 0}+ \underbrace{\left(\sum _{j\in \mathcal{N}_i,j\notin X_6}w_{ij}L_{ji}\right)}_{\leq 0}.
\end{align*}
\item $X_6(k)\rightarrow X_6(k+1)$,  then
\begin{align*}
\nabla _iV_k&=\sum _{j\in \mathcal{N}_i}w_{ij}L_{ji}\nonumber\\
&= \underbrace{\left(\sum _{j\in \mathcal{N}_i\cap \bar{X}_6^i}w_{ij}L_{ji}\right)}_{\geq 0}+ \underbrace{\left(\sum _{j\in \mathcal{N}_i,j\notin \bar{X}_6^i}w_{ij}L_{ji}\right)}_{\leq 0}.
\end{align*}
\end{enumerate}
where the set $\bar{X}_6^i$ is the set of nodes such that $j \in \bar{X}_6^i$ if $x_j(k)\geq x_i(k)$.

Notice that the positive component in $\nabla V_k$ because of a node $s$ belonging to one of the presented $6$ possibilities is only due to a neighbor $p$ in $\{X_5(k), X_6(k)\}$ such that $x_p(k)\geq x_s(k)$. Then $p$ can belong to two possible sets: $X_5$ or $X_6$.

Suppose first that $p\in X_6(k)$,  let $A$ be the increase in $\nabla _sV_k$, then this increase is as follows: 
$$A=w_{ps}L_{ps}>0,$$
but this increase is decreased again in $\nabla _pV_k$ since a node in $X_6(k)$ cannot drop below $X_4(k+1)$, we can write:
\begin{align*}
\nabla _pV_k&=\max \{|x_p(k+1)-m-1|-\alpha _p,0\}\nonumber\\
&\hspace*{0.5cm}-\max \{|x_p(k)-m-1|-\alpha _p,0\}\\
&=\left( x_p(k+1)-m-1-\alpha _p\right)-\left( x_p(k)-1-m-\alpha _p\right)\nonumber\\
&=x_p(k)+\sum _{j\in \mathcal{N}_p}w_{jp}L_{jp}-x_p(k)\nonumber\\
&=\underbrace{w_{sp}L_{sp}}_{-A}+\sum _{j\in \mathcal{N}_p-\{s\}}w_{jp}L_{jp}.
\end{align*} 

Taking the other case, suppose now $p\in X_5$, let $B$ be the increase in $\nabla _sV_k$ of a node $s$ due to its neighbor $p\in X_5$:
$$B=w_{sp}>0,$$
then this increase is decreased again in $\nabla _pV_k$, but we should consider two cases:
\begin{itemize}
\item $p$: $X_5\rightarrow X_m, \; m\geq 4$, then 
\begin{equation}
\nabla _pV_k=\underbrace{w_{ps}L_{sp}}_{\leq -B}+\sum _{j\in \mathcal{N}_p-\{s\}}w_{jp}L_{jp},
\end{equation} 
\item $p$: $X_5\rightarrow X_3$, then 
\begin{align*}
\nabla _pV_k&\leq -1/2\\
&\leq -\sum _{j\in \mathcal{N}_p}w_{pj}\\
&=\underbrace{-w_{ps}}_{-B}-\sum _{j\in \mathcal{N}_p-\{s\}}w_{jp},
\end{align*}
\end{itemize}
and $p$ decreases in the same amount that its neighbor $s$ increased.

{\bf Remark:} \emph{For every positive value that increases $V(k)$, there is a \emph{unique} corresponding negative value that compensates this increase by decreasing $V(k)$. This is because for any link $l\sim (i,j)\in \mathcal{E}$, the increase in $\nabla _iV_k$ due to $l$ forces a decrease  in $\nabla _jV_k$ due to the same link, and so there is one to one mapping between the increased values and the decreased ones.}

 As a result of the discussion we can have the total $\nabla V_k$ cannot increase, namely
 \begin{align*}
 \nabla V_k&=\sum _i\nabla _iV_k\leq 0.
 \end{align*}

\end{proof}

Lemma \ref{ch4:Vnegative} implies that $V(k)$ is non-increasing with time. Now we present two situations under which $V(k)$ is strictly decreasing. The two situations will play an important role in the proof of the main result. 
\begin{itemize}
\item { \bf Situation 1 (S1)} occurs if at iteration $k$ there exists a  link in the network between a node $i\in X_4\cup X_5\cup X_6$ and a node $j\in X_1\cup X_2$, in this case we have, 
\begin{align}
\nabla V_k&\leq -\min \{x_i(k)-m-1-\alpha _i, w_{ij},\bar{c}_j\}\nonumber\\
&\leq -\min \{\gamma ,\delta \},
\end{align}
where $\delta =\min _{(i,j)\in \mathcal{E}}w_{ij}>0.$
\item {\bf Situation 2 (S2)} occurs if at iteration $k$ there exists any link in the network between a node $i\in X_5\cup X_6$ and a node $j\in X_3$, in this case we have, 
\begin{align}
\nabla V_k&\leq -\min \{\alpha _j-c_j(k), w_{ij}\}\nonumber\\
&\leq -\min \{\gamma ,\delta \} .
\end{align}
\end{itemize}
 
\subsection{Proof of Main Result} \label{ch4:subsec:MR}
 
To show that $V(k)$ is eventually decreasing, we have to introduce some more notation. Let $$R(k_0)=\min\{k-k_0; k> k_0, \nabla V_k \leq -\beta\},$$ 
where $\beta >0$ is a positive constant. Notice that if either S1 or S2 occurs at time $T_0>k_0$, then $R(k_0)\leq T_0-k_0$ by considering $\beta=\min \{\gamma ,\delta \}$, i.e., $R(k_0)$ is upper bounded by the minimum time for at least one of the two situations to occur.  We will show that if there exists at least one node in $\{X_4, X_5, X_6\}$ at $k_0$ and $m(k)=m(k_0)$ for $k< R(k_0)+k_0$, then  we can have  a fixed upper bound on $R(k_0)$.  If we looked at the values of the nodes in the network at any iteration $k_0$, we can see that if $k<k_0+R(k_0)$, the network has a special structure: only nodes in $\{X_1,X_2,X_3\}$ have links between each other, nodes in $X_3$ can also have links to $X_4$, but not to $\{X_5,X_6\}$. Nodes in $\{X_5,X_6\}$ can only be connected to $X_4$ (see Fig.~\ref{ch4:situation}).  
\begin{figure}
\begin{center}
\includegraphics[scale=0.5]{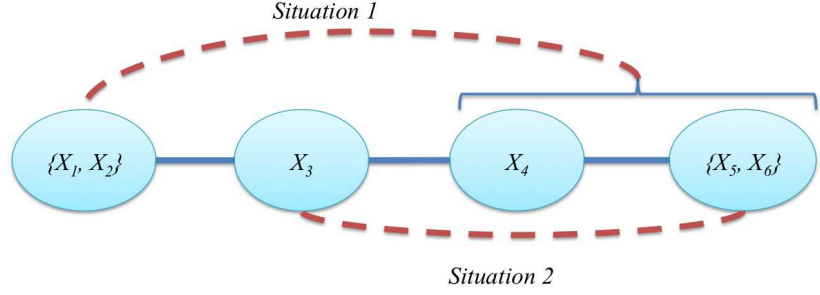}
\caption{The solid lines (blue links) identify the network structure at any iteration $k_0\leq k<k_0+R(k_0)$, while if a dotted link (in red) appears, then $V(k)$ strictly decreases.
} 
\label{ch4:situation}
\end{center}
\end{figure}
Moreover, the values of nodes in $X_3$ cannot increase due to the link between $X_3$ and $X_4$. To see this, let $i\in X_3$ and $s\in X_4$ where $s\in \mathcal{N}_i$. Then we have:
$$x_i(k+1)=x_i(k)+w_ {is}L_{si}+\sum _{j\in \mathcal{N}_i -\{s\}}w_{ij}L_{ji},$$
but since $\lfloor x_i(k) \rfloor=\lfloor x_s(k) \rfloor$, we have $L_{is}=0$ and thus $x_i(k+1)=x_i(k)+\sum _{j\in \mathcal{N}_i -\{s\}}w_{ij}L_{ji}$, so nodes in $X_4$ do not have any effect on nodes in $X_3$ and the values of nodes in $X_3$ cannot increase for all $k<k_0+R(k_0)$ (we will get back to this issue later). 

To find the number of iterations for a dotted (red) link to appear, we define the following function for nodes in $\{X_1, X_2, X_3\}$:
\begin{equation}
f(i,k)=
\begin{cases}
1 &\text{ if }i\in \{X_1(k),X_2(k)\},\\
0 &\text{ if }i\in X_3(k),
\end{cases}
\end{equation}
and let $T_i(k_0,k)$ be the number of times a node $i$ is in $\{X_1, X_2\}$ in the time interval between $k_0$ and $k$, i.e.,
$$T_i(k_0,k)=\sum _{t=k_0}^{t=k}f(i,t).$$

In fact, we can partition the nodes in $\{X_1,X_2,X_3\}$ depending on their distance to nodes in $X_4$. Let $r_i$ be the shortest path distance from a node $i\in \{X_1,X_2,X_3\}$ to the set $X_4$ (i.e., $r_i=\min _{j\in X_4}r_{ij}$ where $r_{ij}$ is the number of hops following the shortest path from $i$ to $j$). We define the set $D_u$ where $u=1,\dots , r$ and $r=\max _ir_i$ as the set of nodes such that $i\in D_u$ if and only if $u=r_i$.   For example, $D_1$ contains nodes that have direct neighbors in $X_4$, $D_2$ contains the nodes that do not have direct neighbors in $X_4$ but there is a node in $X_4$ found 2 hops away, and so on.
Moreover, for any node  $i\in D_u$ such that $u>1$, we can find at least one neighbor $j\in D_{u-1}$. Let $P(i)$ be any one of these neighbors, referred to as the parent of $i$. It is important to note that any node in $D_u$ remains in the set as long as non of the situation has occurred, i.e., the sets $D_u$ for $u=1,\dots ,r$ considered at iteration $k_0$ do not change their elements for $k_0\leq k<k_0+R(k_0)$. We can now obtain the following lemma:

\begin{lemma}\label{ch4:LemmaParent}
If  $\{X_4,X_5,X_6\}\neq \phi$ at an iteration $k_0$, and $m(k)=m(k_0)$ for  $k_0\leq k< k_0+R(k_0)$, then for any integer $N\in \mathbb{N}$: if
$$T_i(k_0,k)\geq N\times\left(\frac{\alpha _{P(i)}}{w_{iP(i)}}+1\right),$$
then 
$$T_{P(i)}(k_0,k)\geq N.$$
\end{lemma}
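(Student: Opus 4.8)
The plan is to track how a node $i \in D_u$ relates to its parent $P(i) \in D_{u-1}$ through the dynamics, showing that node $i$ cannot linger in $\{X_1, X_2\}$ too long without forcing its parent into $\{X_1, X_2\}$ as well. The intuition is that while the relevant situations have not occurred, a node in $X_3$ does not receive any "push up" from the higher sets (as argued in the paragraph preceding the lemma: the link to $X_4$ contributes zero because floors coincide, and links to $X_5, X_6$ are excluded by the structure in Fig.~\ref{ch4:situation}), so its value can only be driven upward by interactions with its parent and other lower-level neighbors. The key mechanism: whenever $i \in X_1 \cup X_2$ (i.e., $\lfloor x_i(k)\rfloor = m$) and $P(i) \in X_3$ (i.e., $\lfloor x_{P(i)}(k)\rfloor = m+1$), the link term $w_{i P(i)} L_{P(i) i} = w_{i P(i)}$ adds a fixed positive increment toward $x_i$, pulling $i$ up, while by symmetry it subtracts $w_{i P(i)}$ from $x_{P(i)}$, pushing $P(i)$ down.

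The main steps I would carry out are: (1) Fix the node $i \in D_u$ with $u > 1$ and its parent $p = P(i) \in D_{u-1}$; since no situation has occurred in $[k_0, k_0+R(k_0))$, these memberships are stable, and moreover by Lemma~\ref{ch4:lem:jump} once a node leaves $X_1$ it never returns, so the set of times $i \in \{X_1, X_2\}$ is (within the window) eventually controlled by the level $\lfloor x_i \rfloor$. (2) Argue that $x_i(k)$ is nondecreasing as long as $i \in X_1 \cup X_2$: the only neighbors that could decrease it would have to be in lower sets, but nodes in $\{X_1,X_2\}$ have the minimal floor $m$, so $L_{ji} \geq 0$ for all neighbors $j$, forcing $x_i(k+1) \geq x_i(k)$; in fact whenever $p \in X_3$ the increment is at least $w_{ip}$. (3) Count: each time $i \in \{X_1, X_2\}$ while $p \in X_3$, node $x_i$ gains at least $w_{ip}$; once $x_i$ has accumulated enough to cross from $m$ to $m+1$ — which happens after at most roughly $1/w_{ip}$ such steps — node $i$ leaves $\{X_1,X_2\}$ permanently (within the window). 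So if $i$ is counted $T_i(k_0,k)$ times in $\{X_1,X_2\}$ but $p$ is in $\{X_1,X_2\}$ only $T_p(k_0,k)$ times, then $i$ was in $\{X_1,X_2\}$ while $p \in X_3$ on at least $T_i(k_0,k) - T_p(k_0,k)$ occasions, and each such occasion contributes $w_{ip}$ to $x_i$; but by the preceding paragraph $x_p$ (hence the ceiling $\alpha_p$-worth of room at level $m+1$) limits how much $x_i$ can rise while staying at level $m$ — the total rise is bounded, giving the inequality $T_i(k_0,k) - T_p(k_0,k) \leq \alpha_p / w_{ip}$, which rearranges (after multiplying through by $N$ and summing over the $N$ "rounds") to the stated bound.

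The delicate point — and the main obstacle — is making the "accounting" rigorous: I need the right potential for node $i$. A clean choice is $\phi_i(k) = x_i(k) - m$, which lies in $[0, 1)$ when $i \in \{X_1, X_2\}$ and which, as argued, only increases while $i$ stays at level $m$, with jumps of size $\geq w_{ip}$ exactly on the steps where $p \in X_3$. The subtlety is that $x_p$ itself is being pushed down by the same link interaction (and possibly by its own parent), so I cannot treat $x_p$'s position as static; I must argue that over any stretch where $i$ accumulates its $N$ full "resets" of level, the parent $p$ must itself have spent at least $N$ steps in $\{X_1,X_2\}$ to absorb the downward pressure — this is where the factor $\alpha_{P(i)}/w_{iP(i)} + 1$ comes from (the $+1$ accounting for the partial step, $\alpha_{P(i)}$ for the width of $X_3$ at level $m+1$). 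I would handle this by a careful induction on the number $N$ of completed rounds, at each round invoking the monotonicity of $x_i$ established in step (2) and the fact that $p$ cannot remain in $X_3$ indefinitely without either being pushed below (into $\{X_1,X_2\}$, incrementing $T_p$) or the window ending. Finally I would note that the hypotheses $\{X_4,X_5,X_6\}\neq\phi$ and $m(k)=m(k_0)$ are exactly what keep the structure of Fig.~\ref{ch4:situation} — and hence the "no push from above onto $X_3$" property — in force throughout $[k_0, k_0+R(k_0))$.
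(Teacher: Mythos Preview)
There is a genuine gap: your argument tracks the wrong quantity. You focus on the child's value $x_i$ rising while $i\in\{X_1,X_2\}$, but showing that $i$ eventually leaves $\{X_1,X_2\}$ contributes nothing toward the conclusion $T_{P(i)}\geq N$; what the lemma needs is that the \emph{parent} $p=P(i)$ drops into $\{X_1,X_2\}$. Moreover, your claim that ``$i$ leaves $\{X_1,X_2\}$ permanently (within the window)'' is false: Lemma~\ref{ch4:lem:jump} only prevents return to $X_1$, not to $X_2$, and nodes can oscillate between $X_2$ and $X_3$. Consequently the total rise of $x_i$ over the window is not bounded by anything useful, and the inequality $T_i-T_p\leq \alpha_p/w_{ip}$ you assert does not follow from your accounting (nor is it true in general: over $K$ successive excursions of $p$ into $X_3$ one can have $T_i-T_p$ of order $K\alpha_p/w_{ip}$, since after each drop to $X_2$ the parent can be pushed back near the top of $X_3$ by its $X_4$ neighbors).

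The paper's proof tracks $x_p$ instead, which is the decisive choice. The paragraph preceding the lemma shows that while $p\in X_3$, $x_p$ is non-increasing: its $X_4$ neighbors contribute zero because their floors coincide with $\lfloor x_p\rfloor$, and $X_5,X_6$ neighbors are excluded by the structural assumption in Fig.~\ref{ch4:situation}. Each step at which $i\in\{X_1,X_2\}$ while $p\in X_3$ then forces $x_p(k+1)\leq x_p(k)-w_{ip}$. Since the width of $X_3$ at node $p$ is $\alpha_p$, after more than $\alpha_p/w_{ip}$ such steps $p$ must have dropped to $X_2$ at least once; this is exactly why $\alpha_{P(i)}$ (not $\alpha_i$ or $1$) appears in the bound. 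That handles $N=1$. For general $N$ the paper simply partitions $[k_0,k]$ into $N$ consecutive sub-intervals on each of which $T_i\geq \alpha_p/w_{ip}+1$, and applies the $N=1$ case independently to each sub-interval --- no induction on $N$ or tracking of ``resets'' of the child is needed.
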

\begin{proof}
The proof is based on the observation we mentioned earlier. For any node $s\in X_3$, its neighbors in $X_4$ do no have any effect on $x_s(k+1)$ and it cannot have any neighbor in $\{X_5,X_6\}$ otherwise one of the situations (S1 or S2) occurs and contradicts the assumption $k< k_0+R(k_0)$. Therefore, the decrease of the node $s$ from $X_3$ to $X_2$ can only be due to its neighbors in $\{X_1,X_2\}$.  Let $i\in \{X_1,X_2\}$ be a neighbor of node $s$, then 
\begin{align*}
x_s(k+1)&=x_s(k)+\sum _{j\in \mathcal{N}_s}w_{js}L_{js}\\
&=x_s(k)+w_{is}\times(-1)+\sum _{j\in \mathcal{N}_s\cap \{X_1,X_2\}-\{i\}}w_{js}L_{js}\\
&\leq x_s(k)-w_{is}\\
&=1 +m+ c_s(k)-w_{is},
\end{align*}
and the node $s$ can either drop to $X_2$ or stay in $X_3$ depending on the resulting value $x_s(k+1)$. And since $c_s(k)\leq \alpha _s$ and $x_s(k+1)$ cannot increase if $s$ was in $X_3$ at iteration $k$, then we are sure that if $i$ was in $\{X_1,X_2\}$ for more than $\frac{\alpha _s}{w_{is}}$ iterations (i.e., $T_i(k_0,k)\geq \frac{\alpha _s}{w_{is}}+1$), then $s$ has dropped to $X_2$ at least once (i.e., $T_s(k_0,k)\geq 1$). Thus since $P(i)\in \mathcal{N}_i$, we have
\begin{equation}\label{ch4:impliesEq}
T_i(k_0,k)\geq \left(\frac{\alpha _{P(i)}}{w_{iP(i)}}+1\right)\;\;\;\Longrightarrow \;\;\; T_{P(i)}(k_0,k)\geq 1.
\end{equation}

If $T_i(k_0,k_N)\geq N\times\left(\frac{\alpha _{P(i)}}{w_{iP(i)}}+1\right)$, then we can find $N-1$ iterations, $k_1, k_2, \dots, k_{N-1}$, such that $$T_i(k_{v-1},k_v-1)\geq \left(\frac{\alpha _{P(i)}}{w_{iP(i)}}+1\right)\;\;\; \text{ for } v=1,\dots, N.$$  By \eqref{ch4:impliesEq}, we have $T_{P(i)}(k_{v-1},k_{v}-1)\geq 1$. Therefore,
\begin{align*}
T_{P(i)}(k_0,k)&=\sum _{v=1}^{N-1}T_{P(i)}(k_{v-1},k_{v}-1)+T_{P(i)}(k_{N-1},k)\\
&\geq \left(\sum _{v=1}^{N-1}1\right) + 1\\
&\geq N,
\end{align*}
and the lemma is proved.
\end{proof}

Now we show that there is a fixed upper bound on the time for either of the situations to occur,

\begin{lemma}\label{ch4:LemmaUpperbound}
If  $\{X_4,X_5,X_6\}\neq \phi$ at an iteration $k_0$, and $m(k)=m(k_0)$ for  $k\geq k_0$, then 
$$R(k_0)\leq n\left(1+\frac{1}{2\delta}\right)^{n-1},$$
where $\delta =\min _{(i,j)\in \mathcal{E}}w_{ij}$ is a positive constant ($\delta >0$).
\end{lemma}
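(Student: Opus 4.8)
The plan is to bound $R(k_0)$ by showing that if no situation (S1 or S2) occurs for a long time, then the nodes nearest to $X_4$ — the set $D_1$ — must eventually stop being in $\{X_1,X_2\}$, which will force a contradiction. The key quantitative tool is Lemma~\ref{ch4:LemmaParent}, which lets me propagate a lower bound on the number of visits to $\{X_1,X_2\}$ from a node down to its parent, losing a multiplicative factor of $\frac{\alpha_{P(i)}}{w_{iP(i)}}+1$ at each hop. Iterating along a shortest path of length at most $r\le n-1$ from any node in $\{X_1,X_2,X_3\}$ to $X_4$, and using the crude bound $\alpha_{P(i)}<\tfrac12$ together with $w_{iP(i)}\ge\delta$, each factor is at most $1+\frac{1}{2\delta}$, so the total compounding over at most $n-1$ hops gives a factor $\left(1+\frac{1}{2\delta}\right)^{n-1}$.

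The argument I would run is a proof by contradiction: suppose $R(k_0) > n\left(1+\frac{1}{2\delta}\right)^{n-1} =: T$. Then for all $k$ with $k_0\le k<k_0+T$ neither S1 nor S2 occurs, the sets $D_u$ are frozen, $m$ is constant, and the network has the layered structure of Fig.~\ref{ch4:situation} (links only within $\{X_1,X_2,X_3\}$, from $X_3$ to $X_4$, and from $X_4$ to $\{X_5,X_6\}$). The crucial observation is a conservation/monotonicity fact about the levels on $\{X_1,X_2,X_3\}$: since these nodes see no influence from $X_4$ (the $L$-term across an $X_3$–$X_4$ link is zero) and cannot be pulled up by $\{X_5,X_6\}$, the quantity $\sum_{i\in\{X_1,X_2,X_3\}}\lfloor x_i(k)\rfloor$ is non-increasing, and in fact some node $i_0\in D_1$ adjacent to $X_4$ must have $\lfloor x_{i_0}(k)\rfloor = m$ — i.e. be in $\{X_1,X_2\}$ — at least once within any window of bounded length, for otherwise the node in $X_4$ it is attached to could never have received its current level. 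Concretely, the node $j\in X_4$ adjacent to $i_0$ has level $\ge 1$; tracing how that level was produced (and using that the total sum is conserved while $m$ and $M$ cannot move), one shows $i_0$ is forced into $\{X_1,X_2\}$ for a positive fraction of the time, so $T_{i_0}(k_0,k_0+T-1)\ge 1$ at the coarsest, and by pushing this through more carefully, $T_{i_0}$ grows at least linearly in the window length.

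Running this lower bound backwards through Lemma~\ref{ch4:LemmaParent}: if some deep node $i$ at distance $r_i$ from $X_4$ were in $\{X_1,X_2\}$ for at least $T$ of the $T$ available iterations (i.e. always), then its parent is there at least $T/(1+\frac{1}{2\delta})$ times, its grandparent at least $T/(1+\frac{1}{2\delta})^2$ times, and so on, so a node in $D_1$ is in $\{X_1,X_2\}$ at least $T/(1+\frac{1}{2\delta})^{r-1}\ge T/(1+\frac{1}{2\delta})^{n-1} = n$ times; but a node adjacent to $X_4$ that is in $\{X_1,X_2\}$ even once while its $X_4$-neighbor stays put makes that $X_4$-node decrease (the link $D_1$–$X_4$ becomes a dotted link), triggering S1 and contradicting $k<k_0+R(k_0)$. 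Chasing the arithmetic, one reaches the contradiction well before $T$ iterations, giving $R(k_0)\le n\left(1+\frac{1}{2\delta}\right)^{n-1}$. The main obstacle I anticipate is the bookkeeping in the base case: making rigorous the claim that a $D_1$ node is forced into $\{X_1,X_2\}$ sufficiently often purely from the layered structure and the monotonicity of $m$, $M$, and the partial sums — this is where the "values of nodes in $X_3$ cannot increase" observation and the conservation identity \eqref{ch4:AveCons} have to be combined carefully, and it is the step most likely to need an auxiliary potential argument rather than a one-line estimate.
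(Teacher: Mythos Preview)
Your propagation machinery via Lemma~\ref{ch4:LemmaParent} and the per-hop bound $\frac{\alpha_{P(i)}}{w_{iP(i)}}+1 \le 1+\frac{1}{2\delta}$ are correct and match the paper. The gap is in your base case, and it is the \emph{opposite} of what you anticipate.

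You try to seed the argument directly at $D_1$, arguing that some $D_1$ node is forced into $\{X_1,X_2\}$ by ``tracing how its $X_4$-neighbor acquired its level''; this is murky (the $X_4$ node may have been there since time $k_0$) and, more to the point, unnecessary. The paper seeds the recursion at the \emph{far} end, via a one-line pigeonhole: since $m(k)=m(k_0)$ for all $k\ge k_0$, at every iteration at least one node attains the minimum floor and hence lies in $\{X_1,X_2\}$. Therefore
\[
\sum_{i\in\{X_1,X_2,X_3\}} T_i(k_0,k)\;\ge\; k-k_0.
\]
Taking $\bar k:=k_0+n\bigl(1+\tfrac{1}{2\delta}\bigr)^{n-1}$ and dividing the sum by at most $n$ terms, \emph{some} node $i\in D_u$ (with $u$ possibly large) satisfies $T_i(k_0,\bar k)\ge \bigl(1+\tfrac{1}{2\delta}\bigr)^{n-1}$. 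Now apply Lemma~\ref{ch4:LemmaParent} $u-1$ times along the parent chain to reach $s\in D_1$ with $T_s(k_0,\bar k)\ge \bigl(1+\tfrac{1}{2\delta}\bigr)^{n-u}\ge 1$. A $D_1$ node that lands in $\{X_1,X_2\}$ is, by definition, adjacent to $X_4$, which is exactly S1; hence $R(k_0)\le \bar k-k_0$.

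So the leading factor $n$ is there to make the pigeonhole work, while the path length $r\le n-1$ supplies the exponent. Your paragraph about conservation of $\sum_{i}\lfloor x_i(k)\rfloor$ and level-tracing is not needed and would not give you a quantitative seed anyway; drop it in favor of the pigeonhole and the proof closes immediately.
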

 \begin{proof}
 Notice first that for any iteration $\bar{k}\geq k_0$, if $T_i(k_0,\bar{k})\geq 1$ where $i\in D_1$, then situation 1 has occurred and $R(k_0)\leq \bar{k}-k_0$. 
 
 Moreover, since  $m(k)=m(k_0)$ for  $k\geq k_0$, then at every iteration $k$ there is at least one node in $\{X_1,X_2\}$, leading to
 $$\sum _{i\in \{X_1,X_2,X_3\}}T_i(k_0,k)\geq k-k_0.$$
 Let $\bar{k}=k_0+n\left(1+\frac{1}{2\delta}\right)^{n-1}$; then we have 
 $$\sum _{i\in \{X_1,X_2,X_3\}}T_i(k_0,\bar{k})\geq n\left(1+\frac{1}{2\delta}\right)^{n-1},$$
 and there must be a node $i\in D_u$ in this sum such that 
 $$ T_i(k_0,\bar{k})\geq \left(1+\frac{1}{2\delta}\right)^{n-1}.$$
Without loss of generality, we can suppose  $\frac{1}{2\delta}\in \mathbb{N}$. So applying Lemma \ref{ch4:LemmaParent}, we can see that
\begin{align*}
T_i(k_0,\bar{k})&\geq \left(1+\frac{1}{2\delta}\right)^{n-1}\\
&\geq \left(1+\frac{\alpha _{P(i)}}{w_{iP(i)}}\right)\times\left(1+\frac{1}{2\delta}\right)^{n-2},\\
&= \left(1+\frac{\alpha _{P(i)}}{w_{iP(i)}}\right)\times N,
\end{align*}
where $N= \left(1+\frac{1}{2\delta}\right)^{n-2}$, which implies 
$$T_j(k_0,\bar{k})\geq \left(1+\frac{1}{2\delta}\right)^{n-2},$$
where $j=P(i)$ and $j\in D_{u-1}$. Doing this recursively ($u-1$ times), we see that there is a node $s\in D_1$ such that,
 $$T_s(k_0,\bar{k})\geq \left(1+\frac{1}{2\delta}\right)^{n-u},$$
 but since $u\leq r\leq  n$, we have $T_s(k_0,\bar{k})\geq 1$ which means situation S1 occurred because $s\in D_1$. Therefore,
 \begin{align*}
 R(k_0)&\leq \bar{k}-k_0\\
 &\leq n\left(1+\frac{1}{2\delta}\right)^{n-1},
 \end{align*}
 and the lemma is proved.
 \end{proof}

We also need the following lemma, 
\begin{lemma}\label{ch4:LemmaBound}
Suppose Assumption~\ref{ch4:ass1} holds. Let $\beta =\min\{\gamma , \delta \} $, then for the quantized system \eqref{ch4:quantizedstateeq2},  at any time $k_0$, there is a finite time  $k_1\geq k_0$  such that for $k\geq k_1$, either   $\{X_4,X_5,X_6\}=\phi$ or $m(k)>m(k_0)$. Moreover,
$$k_1\leq k_0+n\left(\frac{V(k_0)}{\beta}+1 \right)\left( \frac{1}{2\delta}+1\right)^{n-1}.$$
 \end{lemma}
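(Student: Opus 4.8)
The plan is to combine Lemma \ref{ch4:LemmaUpperbound} (which bounds how long we must wait, while $m$ stays constant and $\{X_4,X_5,X_6\}\neq\phi$, before one of the situations S1 or S2 forces $\nabla V_k\le -\beta$) with the fact (Lemma \ref{ch4:Vnegative}) that $V(k)$ is non-increasing whenever $m$ does not change, plus the observation that $V(k)\ge 0$ always. So $V$ cannot keep dropping by $\beta$ indefinitely without either $m$ increasing or the sets $\{X_4,X_5,X_6\}$ emptying out.

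Concretely, I would argue by contradiction, or directly, as follows. Fix $k_0$ and suppose that for every $k$ in the range $k_0\le k< k_1$ we have $\{X_4(k),X_5(k),X_6(k)\}\neq\phi$ and $m(k)=m(k_0)$; I want to show $k_1$ cannot exceed the stated bound. Set $\beta=\min\{\gamma,\delta\}$ and let $Q=n\left(\frac{1}{2\delta}+1\right)^{n-1}$, the quantity appearing in Lemma \ref{ch4:LemmaUpperbound}. Partition the interval $[k_0,k_1)$ into consecutive blocks each of length at most $Q$: starting at $t_0=k_0$, Lemma \ref{ch4:LemmaUpperbound} (applied at $t_0$, legitimately since $\{X_4,X_5,X_6\}\neq\phi$ and $m$ is constant on the whole interval) guarantees there is some $t_1$ with $t_0< t_1\le t_0+Q$ and $\nabla V_{t_1-1}\le -\beta$. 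Then apply Lemma \ref{ch4:LemmaUpperbound} again at $t_1$ to get $t_2\le t_1+Q$ with $\nabla V_{t_2-1}\le -\beta$, and so on. Between these specially marked iterations $V$ is non-increasing by Lemma \ref{ch4:Vnegative}, and at each marked step it drops by at least $\beta$. Hence after $j$ such drops, $V\le V(k_0)-j\beta$. Since $V\ge 0$, we must have $j\le V(k_0)/\beta$, so the process can last at most $\left(\frac{V(k_0)}{\beta}+1\right)$ blocks of length $Q$, giving
$$k_1\le k_0+\left(\frac{V(k_0)}{\beta}+1\right)Q = k_0+n\left(\frac{V(k_0)}{\beta}+1\right)\left(\frac{1}{2\delta}+1\right)^{n-1},$$
which is exactly the asserted bound. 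Outside this window, by construction, either $m$ has increased past $m(k_0)$ or $\{X_4,X_5,X_6\}$ has become empty; and by Lemma \ref{ch4:lem:empty} (and its noted extension to $\{X_5,X_6\}$ and $\{X_4,X_5,X_6\}$) once $\{X_4,X_5,X_6\}$ empties it stays empty, and once $m$ increases it cannot return, so the conclusion persists for all $k\ge k_1$.

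The one point needing care — and the main obstacle — is making sure Lemma \ref{ch4:LemmaUpperbound} is applicable at each marked iteration $t_j$, i.e. that its hypotheses ($\{X_4,X_5,X_6\}\neq\phi$ at $t_j$, and $m(k)=m(t_j)$ for $k\ge t_j$) are inherited from our standing assumption on $[k_0,k_1)$. The assumption that $m(k)=m(k_0)$ throughout the interval handles the constancy of $m$; and the non-emptiness of $\{X_4,X_5,X_6\}$ at each $t_j< k_1$ is precisely part of the standing hypothesis we are trying to contradict/bound, so there is no circularity. A small subtlety is that Lemma \ref{ch4:LemmaUpperbound} as stated requires $m(k)=m(k_0)$ for \emph{all} $k\ge k_0$ rather than just on a finite window; but its proof only uses $m$ constant up to the iteration where S1 first occurs, which is within the window, so the finite-window version we need is immediate. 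Finally I would note that the "$+1$" in $\left(\frac{V(k_0)}{\beta}+1\right)$ absorbs the integer rounding of the number of blocks and the off-by-one in indexing $\nabla V_{t_j-1}$ versus $t_j$, so no sharper bookkeeping is required.
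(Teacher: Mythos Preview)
Your proposal is correct and follows essentially the same approach as the paper: repeatedly invoke Lemma~\ref{ch4:LemmaUpperbound} to obtain at least one $\beta$-drop in $V$ per block of length $Q=n(1/(2\delta)+1)^{n-1}$, use Lemma~\ref{ch4:Vnegative} for non-increase in between, and bound the number of blocks by $V(k_0)/\beta+1$ via $V\ge 0$. The paper phrases this as a terse contradiction argument, whereas you spell out the block decomposition, the finite-window applicability of Lemma~\ref{ch4:LemmaUpperbound}, and the persistence of the conclusion via Lemma~\ref{ch4:lem:empty}; these are refinements of presentation rather than a different method.
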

\begin{proof}
Let us prove it by contradiction. Suppose  that $\{X_4,X_5,X_6\}\neq \phi$ and $m(k)=m(k_0)$ for  $k\geq k_0$. Therefore we can apply Lemma \ref{ch4:LemmaUpperbound} to show that there is an upper bound $R(k_0)$ for situations S1 or S2 to occur. Whenever one of the situations occurs, we have $\nabla V_k \leq -\beta$, otherwise $\nabla V_k \leq 0$. For $k>k_0+ n\left(\frac{V(k_0)}{\beta}+1 \right)\left( \frac{1}{2\delta}+1\right)^{n-1}$, we have that situations S1 or S2 have occurred at least $\left(\frac{V(k_0)}{\beta}+1 \right)$ times; then 
\begin{align*}
V(k)&\leq V(k_0)-\beta \times \left(\frac{V(k_0)}{\beta}+1 \right) \leq -\beta <0,
\end{align*}
which is a contradiction since $V(k)\geq 0$ is a Lyapunov function. As a result,  there exists an iteration $k_1$ satisfying  $k_1\leq k_0 + n\left(\frac{V(k_0)}{\beta}+1 \right)\left( \frac{1}{2\delta}+1\right)^{n-1}$ such that for $k\geq k_1$, either   $\{X_4,X_5,X_6\}=\phi$ or $m(k)>m(k_0)$.
\end{proof}

We are now ready to prove the following propositions, 
\begin{prop}\label{ch4:PropMain}
Consider the quantized system \eqref{ch4:quantizedstateeq2}. Suppose that Assumption~\ref{ch4:ass1} holds. Then for any initial value $\mathbf{x}(0)$, there is a finite time iteration where $\{X_4,X_5,X_6\}=\phi$. 
\end{prop}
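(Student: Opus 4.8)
The plan is to bootstrap Proposition~\ref{ch4:PropMain} from Lemma~\ref{ch4:LemmaBound} together with the monotonicity of $m(k)$ that was established inside the proof of Proposition~\ref{ch4:PropCycle}. First I would recall that along any trajectory of \eqref{ch4:quantizedstateeq2} the quantity $m(k)=\min_{i}\lfloor x_i(k)\rfloor$ is non-decreasing and bounded, since $m(0)\le m(k)\le M(k)\le M(0)$ and $m(k)$ is integer-valued; hence $m(k)$ can strictly increase at most $M(0)-m(0)$ times and is therefore eventually constant. Let $K^{*}$ be a finite iteration past which $m$ is frozen, i.e.\ $m(k)=m(K^{*})=:m^{*}$ for all $k\ge K^{*}$.

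Next I would apply Lemma~\ref{ch4:LemmaBound} at time $k_0=K^{*}$. It furnishes a finite $k_1\ge K^{*}$ such that for every $k\ge k_1$ one of two alternatives holds: $\{X_4(k),X_5(k),X_6(k)\}=\phi$, or $m(k)>m(K^{*})=m^{*}$. By the choice of $K^{*}$ the second alternative is impossible for $k\ge K^{*}$, so the first must hold for all $k\ge k_1$; in particular there is a finite iteration at which $\{X_4,X_5,X_6\}=\phi$, which is exactly the claim. (As a safety net, one can also recall the remark following Lemma~\ref{ch4:lem:empty}: once $\{X_4,X_5,X_6\}$ becomes empty while $m$ stays fixed at $m^{*}$, it remains empty thereafter, so the conclusion is stable.)

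Finally, to keep everything genuinely finite one should chain the applications of Lemma~\ref{ch4:LemmaBound}: starting from $k_0=0$, each time the lemma returns the ``$m$ increased'' branch we restart from the new time, and since each such block is controlled by $n\big(V(k_0)/\beta+1\big)\big(1/(2\delta)+1\big)^{n-1}$ with $V(k_0)$ bounded (the state lives in a finite set), and at most $M(0)-m(0)$ increases can occur, $K^{*}$ and hence $k_1$ are finite, with an explicit bound if desired. The step I expect to require the most care is precisely the interaction between ``$m$ increases'' and the fact that the six sets $X_1,\dots,X_6$ are defined \emph{relative to the current} $m(k)$: one must invoke Lemma~\ref{ch4:LemmaBound} only after $m$ has attained its terminal value, so that the ``$m$ increased'' escape route in that lemma is closed; everything else is routine bookkeeping on top of the lemmas already proved.
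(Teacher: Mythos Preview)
Your proposal is correct and follows essentially the same approach as the paper: both argue that $m(k)$ can increase at most $M(0)-m(0)$ times and then invoke Lemma~\ref{ch4:LemmaBound} to force $\{X_4,X_5,X_6\}=\phi$ once the ``$m$ increased'' escape route is exhausted. The paper's write-up is terser (it simply says ``apply Lemma~\ref{ch4:LemmaBound} for $M(0)-m(0)$ times''), while you spell out both the single application after $m$ has stabilized and the iterative chaining, but the underlying argument is the same.
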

\begin{proof}
The value $m(k)$ cannot increase more than $M(0)-m(0)$ number of times because $M(k)$ is non-increasing. Therefore, applying Lemma \ref{ch4:LemmaBound} for  $M(0)-m(0)$ times, we see that  $\{X_4,X_5,X_6\}=\phi$ in a finite number of iterations. 
\end{proof}

Proposition \ref{ch4:PropMain} shows that in fact the nodes are restricted in a finite number of iterations to the  sets $\{X_1,X_2,X_3\}$. In fact, we can even show a  stronger result, that either $X_1$ or $X_3$ can be nonempty, but not both. This is given in the next proposition.

\begin{prop}\label{ch4:PropMain2}
Consider the quantized system \eqref{ch4:quantizedstateeq2}. Suppose that Assumption~\ref{ch4:ass1} holds. Then for any initial value $\mathbf{x}(0)$, there is a finite time iteration where either $\{X_3,X_4,X_5,X_6\}=\phi$ or $\{X_1,X_4,X_5,X_6\}=\phi$. 
\end{prop}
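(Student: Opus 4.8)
The plan is to use Proposition~\ref{ch4:PropMain} to reduce to a two‑level regime, and then to propagate ``frozenness'' through the connected graph, so that no further Lyapunov bookkeeping is needed. By Proposition~\ref{ch4:PropMain} and the remark following Lemma~\ref{ch4:lem:empty}, there is a finite iteration $k_0$ with $\{X_4(k),X_5(k),X_6(k)\}=\phi$ for all $k\ge k_0$; since $m(k)$ is nondecreasing and bounded above by $M(0)$, I may enlarge $k_0$ so that moreover $m(k)=m^*$ is constant for $k\ge k_0$. Thus for all $k\ge k_0$ every node has $\lfloor x_i(k)\rfloor\in\{m^*,m^*+1\}$, hence $L_{ji}(k)=\lfloor x_j(k)\rfloor-\lfloor x_i(k)\rfloor\in\{-1,0,1\}$. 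In this regime Lemma~\ref{ch4:lem:jump} gives that $X_1(k)$ is nonincreasing, so $X_1(k)=X_1^*$ for all $k$ beyond some $k_1\ge k_0$. If $X_1^*=\phi$, then since $X_4,X_5,X_6$ are already empty we have $\{X_1,X_4,X_5,X_6\}=\phi$ from $k_1$ on and are done; so assume $X_1^*\neq\phi$, and I will show $\{X_3,X_4,X_5,X_6\}=\phi$ eventually.

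The key elementary fact is: if a node $i$ stays at floor $m^*$ on some interval $[\tau,\infty)$, then $x_i$ is eventually constant and afterwards every neighbor of $i$ also stays at floor $m^*$. Indeed $L_{ji}(k)=\lfloor x_j(k)\rfloor-m^*\ge 0$ for all $j\in\mathcal N_i$ and $k\ge\tau$, so $x_i(k)$ is nondecreasing; it is bounded above (by $m^*+1$) and, by \eqref{ch4:StepLevels}, lies in a finite set, hence is eventually constant; once $x_i$ is constant, $0=x_i(k{+}1)-x_i(k)=\sum_{j\in\mathcal N_i}w_{ji}\big(\lfloor x_j(k)\rfloor-m^*\big)$ with nonnegative summands forces $\lfloor x_j(k)\rfloor=m^*$ for every $j\in\mathcal N_i$. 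Call a node \emph{settled} if from some time on it stays at floor $m^*$ with $x_i$ constant. Every node of $X_1^*$ is settled: for $k\ge k_1$ it lies in $X_1$, hence at floor $m^*$, and the fact applies. The same fact shows that if $i$ is settled then each neighbor $j$ of $i$ eventually stays at floor $m^*$, and then (applying the fact to $j$) is itself settled. Hence the set $S$ of eventually‑settled nodes is nonempty and closed under taking neighbors, so by connectedness of $\mathbb{G}$, $S=\mathcal V$. Therefore there is a finite $k^*$ past which every node is at floor $m^*$, i.e.\ $X_3(k)=\phi$ and (still) $X_4(k)=X_5(k)=X_6(k)=\phi$, which is one of the two asserted alternatives.

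The step needing the most care is the ``settled propagates to neighbors'' induction. The subtlety is that the Lyapunov function $V$ is identically zero once $\{X_4,X_5,X_6\}=\phi$ and $X_1$ has frozen to $X_1^*$, so none of the earlier strict‑decrease devices (Situations S1 and S2, or Lemma~\ref{ch4:Vnegative}) are available here; the argument instead rests entirely on combining the monotone‑and‑discrete‑hence‑eventually‑constant fact with the connectivity of $\mathbb{G}$, and one must check that the two conditions defining ``settled'' (floor equal to $m^*$ and $x_i$ constant) hold on a common tail interval as the settled region is enlarged outward from $X_1^*$. One should also verify that Lemma~\ref{ch4:lem:jump} is invoked only on $[k_0,\infty)$ where $m$ is constant, so that $X_1(k)$ is genuinely nonincreasing there.
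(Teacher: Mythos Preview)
Your proof is correct and takes a genuinely different route from the paper's. The paper introduces a third strict-decrease situation S3 (an edge joining $X_1$ to $X_3$) and then replays the Lyapunov machinery of Lemmas~\ref{ch4:LemmaParent}--\ref{ch4:LemmaBound} with the roles of the sets permuted, so that $V$ drops by at least $\min\{\gamma,\delta\}$ every bounded number of steps until either $X_1$ or $X_3$ empties. You instead first freeze $m$ and then $X_1$, and run a direct monotone-plus-discreteness argument: a node permanently at floor $m^*$ has nondecreasing $x_i$ lying in a finite set, hence eventually constant, which forces each neighbor to floor $m^*$; connectivity then propagates this through all of $\mathcal{V}$. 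Your approach is more elementary and sidesteps a second pass through the Lyapunov bookkeeping; the paper's approach, by staying within the $V$-framework, yields (implicitly) an explicit iteration bound of the same shape as Lemma~\ref{ch4:LemmaBound}, whereas your settling times are controlled only through the level count $B_i$ of \eqref{ch4:StepLevels} and the graph diameter.

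One small correction to your commentary (not to the proof itself): the Lyapunov function $V$ is \emph{not} identically zero once $\{X_4,X_5,X_6\}=\phi$ with $X_1^*\neq\phi$; each node $i\in X_1^*$ contributes $(m^*+1-x_i(k))-\alpha_i>0$. This residual is precisely what the paper's S3 drives down. Your observation that S1 and S2 are unavailable is correct, but a Lyapunov device does remain; you simply do not need it.
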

\begin{proof}
Due to Proposition \ref{ch4:PropMain}, we can find a finite time $T$ such that $\{X_4,X_5,X_6\}=\phi$. Without loss of generality, we consider $T=0$. 
In fact, a third situation that can strictly decrease $V(k)$ occurs when there is a link between a node in $X_1$ and a node in $X_3$.  Fig.~\ref{ch4:situation3} shows the network structure.
\begin{figure}
\begin{center}
\includegraphics[scale=0.5]{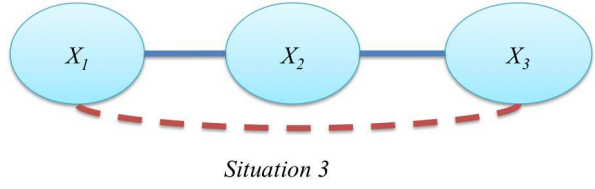}
\caption{The solid lines (blue links) identify the network structure at any iteration $k_0\leq k<k_0+R(k_0)$, while if the dotted link (in red) appears, then $V(k)$ strictly decreases.
} 
\label{ch4:situation3}
\end{center}
\end{figure}
If Situation 3 (S3) occurs and $(ij)\in \mathcal{E}$ where $i\in X_1$ and $j\in X_3$, then 
\begin{align}
\nabla V_k&\leq -\min \{\bar{c}_i(k)-\alpha _i, w_{ij}\}\nonumber\\
&\leq -\min \{\gamma ,\delta \}.
\end{align}
In fact, similar to the reasoning along this subsection, we can bound the number of iterations for S3 to occur.  The bound is exactly the same as the one developed for the other situations. Instead of repeating the derivations, the proof reads roughly the same starting from the beginning of Subsection \ref{ch4:subsec:MR} but by replacing $X_1$, $X_2$, and $X_3$ by $\phi$, replacing $X_2$ by $X_3$, replacing $X_3$ by $X_2$, replacing $X_4$ by $X_1$, and finally replacing the condition $m(k)=m(k_0)$ by $X_3 \neq \phi$. Thus, Lemma \ref{ch4:LemmaBound} will read as follows: Suppose Assumption~\ref{ch4:ass1} holds. Let $\beta =\min\{\gamma , \delta \} $, then for the quantized system \eqref{ch4:quantizedstateeq2},  at any time $k_0$, there is a finite time  $k_1\geq k_0$  such that for $k\geq k_1$, either   $X_1=\phi$ or $X_3=\phi$. This ends the proof.
\end{proof}

\begin{prop}\label{ch4:PropMain3}
Consider the quantized system \eqref{ch4:quantizedstateeq2}. Suppose that Assumption~\ref{ch4:ass1} holds and let $\alpha =\max_i\alpha _i$. Then for any initial value $\mathbf{x}(0)$, there is a finite time iteration where either
\begin{itemize}
\item the values of nodes are cycling in a small neighborhood around the average such that :
\begin{equation}\begin{cases}
|x_i(k)-x_j(k)|\leq \alpha _i+\alpha _j \text{ for all } i,j \in \mathcal{V}\\
|x_i(k)-x_{ave}|\leq 2\alpha \text{ for all } i\in \mathcal{V},
\end{cases}\end{equation}
\item or the quantized values have reached consensus, i.e.,
\begin{equation}
\begin{cases}
\lfloor x_i(k)\rfloor=\lfloor x_j(k)\rfloor \text{ for all } i,j \in \mathcal{V}\\
|x_i(k)-x_{ave}|< 1 \text{ for all } i\in \mathcal{V}.
\end{cases}
\end{equation}
\end{itemize}
\end{prop}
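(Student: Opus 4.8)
The plan is to obtain Proposition \ref{ch4:PropMain3} as an essentially immediate consequence of Proposition \ref{ch4:PropMain2} together with the cyclicity supplied by Proposition \ref{ch4:PropCycle}, by converting the set-membership information into the two quantitative alternatives. First I would invoke Proposition \ref{ch4:PropMain2}: after finitely many iterations the configuration satisfies either $\{X_3,X_4,X_5,X_6\}=\phi$ (call this Case A) or $\{X_1,X_4,X_5,X_6\}=\phi$ (Case B). As in the proof of that proposition, since each of the situations S1--S3 strictly decreases the nonnegative function $V$, such a configuration, once reached, cannot be left: $X_4,X_5,X_6$ remain empty by Lemma \ref{ch4:lem:empty} and its extensions, $X_1$ remains empty by Lemma \ref{ch4:lem:jump}, and when all floors coincide the update \eqref{ch4:quantizedstateeq2} reduces to $\mathbf{x}(k+1)=\mathbf{x}(k)$, so the state is frozen. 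Hence it suffices to analyze the two cases from the first iteration $k_0$ at which one of them holds.

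In Case A every node lies in $X_1\cup X_2$, so $m(k)\le x_i(k)<m(k)+1$ and therefore $\lfloor x_i(k)\rfloor=m(k)$ for all $i$; this is precisely quantized consensus, and being a fixed point it persists (incidentally, $x_{ave}\in[m(k),m(k)+1)$ forces $\lfloor x_{ave}\rfloor=m(k)$, matching the Main Convergence Result). Since both $x_i(k)$ and $x_{ave}=\frac1n\sum_i x_i(k)$ lie in the length-one interval $[m(k),m(k)+1)$, we get $|x_i(k)-x_{ave}|<1$. In Case B every node lies in $X_2\cup X_3$, so by the definitions of these sets $|x_i(k)-m(k)-1|\le\alpha_i$ for each $i$; the triangle inequality gives $|x_i(k)-x_j(k)|\le\alpha_i+\alpha_j$, and since each $x_j(k)$—hence the average $x_{ave}$—lies in $[m(k)+1-\alpha,\,m(k)+1+\alpha]$, also $|x_i(k)-x_{ave}|\le\alpha_i+\alpha\le 2\alpha$. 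Because Proposition \ref{ch4:PropCycle} guarantees the system is ultimately cyclic, in Case B the node values cycle inside this $2\alpha$-neighborhood of the average, which is the claimed alternative.

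I do not anticipate a real difficulty: the only point needing care is the persistence claim in the first step—that the favorable configuration provided by Proposition \ref{ch4:PropMain2} is absorbing rather than merely attained once—which I would dispatch by reusing the monotonicity of $V$ and the no-return Lemmas \ref{ch4:lem:empty} and \ref{ch4:lem:jump} exactly as in the proof of Proposition \ref{ch4:PropMain2}, together with the observation that the Case-A configuration is a genuine equilibrium of \eqref{ch4:quantizedstateeq2}.
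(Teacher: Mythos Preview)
Your proposal is correct and follows essentially the same route as the paper: invoke Proposition~\ref{ch4:PropMain2}, then in each of the two resulting cases read off the quantitative bounds directly from the definitions of the sets $X_1,\dots,X_6$ together with average conservation \eqref{ch4:AveCons}, appealing to Proposition~\ref{ch4:PropCycle} for the cycling claim. Your treatment is in fact slightly more careful than the paper's in that you explicitly argue persistence of the favorable configuration (via Lemmas~\ref{ch4:lem:empty} and~\ref{ch4:lem:jump} and the fixed-point observation for Case~A), a point the paper leaves implicit.
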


\begin{proof}
The two possibilities are consequence of the two possible cases of Proposition \ref{ch4:PropMain2}, 
\begin{itemize}
\item Case $\{X_1,X_4,X_5,X_6\}=\phi$. Then all nodes are in $\{X_2,X_3\}$ and by the definition of the sets we have $|x_i(k)-x_j(k)|\leq \alpha _i+\alpha _j \text{ for all } i,j \in \mathcal{V}$, so nodes are cycling (due to Proposition \ref{ch4:PropCycle}) around $m+1$. Moreover, since the average is conserved from Eq.~\eqref{ch4:AveCons}, we have:
\begin{align*}
|x_i(k)-x_{ave}|&=|x_i(k)-x_{ave}(k)|\\
&\leq |\max _ix_i(k)-\min _ix_i(k)|\\
&\leq 2\max_i\alpha _i\\
&=2\alpha,
\end{align*}
\item Case $\{X_3,X_4,X_5,X_6\}=\phi$. Then all nodes are in $\{X_1,X_2\}$ and by the definition of the sets we have reached quantized consensus. Since for any $i$ and $j$ we have $c_i(k),c_j(k)\in [0,1)$, then $|x_i(k)-x_j(k)|<1$ and as in the above due to Eq.~\eqref{ch4:AveCons}, we have $|x_i(k)-x_{ave}|< 1$.
\end{itemize}
\end{proof}

\section{Discussion}\label{ch4:D}

Propositions \ref{ch4:PropCycle}  shows that the uniform quantization on communications given by the model of this report can have a very important cyclic property. Up to our knowledge, this is the first work in deterministic quantized algorithms that shows this cyclic effect of nodes' values and it is also shown by Proposition \ref{ch4:PropMain3} that the cyclic values can be control by a simple distributed adjustment of the weights. This can have an important impact on the design of quantized communication algorithms.\footnote{Pattern generation (as for cyclic systems) plays an important role in the design of many mechanical and electrical systems \cite{Brockett:1997Cyc}. } For example, due to the cyclic effect,  nodes can use the history of their values to reach asymptotic convergence as the following proposition shows:
\begin{col}\label{ch4:PropMainnew}
Consider the quantized system \eqref{ch4:quantizedstateeq2}. Suppose that Assumption~\ref{ch4:ass1} holds. Then for any initial value $\mathbf{x}(0)$, if $y_i(k)$ is an estimate of the average at node $i$ following the recursion:
\begin{equation}
y_i(k)=\frac{k}{k+1}y_i(k-1)+\frac{1}{k+1}x_i(k), \; \; \forall i\in \mathcal{V}, 
\end{equation}
where $y_i(0)=x_i(0)$, then $y_i(k)$ is converging,
\begin{equation}
\lim _{k\rightarrow \infty} y_i(k)=y_i^*, \; \; \forall i\in \mathcal{V}, 
\end{equation}
having 
$$|y_i^*-x_{ave}|\leq 1.$$
\end{col}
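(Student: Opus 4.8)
The plan is to recognize $y_i(k)$ as a time (Ces\`aro) average of the trajectory of agent $i$ and then to exploit the eventual periodicity already established. First I would solve the recursion explicitly: a one-line induction on $k$, using $y_i(0)=x_i(0)$, shows that
\[
y_i(k)=\frac{1}{k+1}\sum_{t=0}^{k}x_i(t),\qquad \forall i\in\mathcal{V}.
\]
So $y_i(k)$ is nothing but the arithmetic mean of the first $k+1$ iterates of $x_i$, and the corollary reduces to a statement about averages of the sequence $(x_i(k))_{k\ge 0}$.

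Next I would invoke Proposition~\ref{ch4:PropCycle}: there is a period $P\in\mathbb{N}$ and a finite time $k_0$ with $\mathbf{x}(k+P)=\mathbf{x}(k)$ for all $k\ge k_0$, so each $(x_i(k))_{k\ge k_0}$ is $P$-periodic and takes finitely many (hence bounded) values. I would then split the Ces\`aro sum at $k_0$: the prefix $\frac{1}{k+1}\sum_{t=0}^{k_0-1}x_i(t)$ is a fixed number divided by $k+1$, hence vanishes; for the tail, writing the number of remaining terms as $qP+s$ with $0\le s<P$, periodicity gives $\sum_{t=k_0}^{k}x_i(t)=q\sum_{t=k_0}^{k_0+P-1}x_i(t)+(\text{bounded remainder})$, and dividing by $k+1$ and letting $k\to\infty$ (so that $q/(k+1)\to 1/P$ and the remainder over $k+1$ tends to $0$) yields convergence to
\[
y_i^*:=\frac{1}{P}\sum_{t=k_0}^{k_0+P-1}x_i(t).
\]

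Finally, to locate $y_i^*$, I would use Proposition~\ref{ch4:PropMain3}: there is a finite iteration after which, in either alternative, $|x_i(k)-x_{ave}|<1$ holds for all $i\in\mathcal{V}$ and all later $k$ (in the cyclic alternative the sharper bound $2\alpha<1$ holds, and in the consensus alternative the system sits at a fixed point). Since $y_i^*$ is the average of $x_i$ over one full period, and $P$-periodicity lets me choose that period to begin past both $k_0$ and the iteration supplied by Proposition~\ref{ch4:PropMain3}, every value being averaged lies within distance $1$ of $x_{ave}$, so $|y_i^*-x_{ave}|\le 1$. The only places requiring a little care are handling the partial-period remainder term in the Ces\`aro limit and making sure the chosen period window lies past \emph{both} the onset of periodicity and the onset of the Proposition~\ref{ch4:PropMain3} bound; neither is a real obstacle, so the whole argument is short once the two earlier propositions are in hand.
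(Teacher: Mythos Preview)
Your proposal is correct and follows essentially the same route as the paper: unwind the recursion to the Ces\`aro mean, split at the onset of periodicity so the prefix vanishes, use the eventual $P$-periodicity (Proposition~\ref{ch4:PropCycle}) to identify the limit as the average over one cycle, and then bound $|y_i^*-x_{ave}|$ via Proposition~\ref{ch4:PropMain3}. If anything you are slightly more careful than the paper about the partial-period remainder and about aligning the averaging window past both thresholds, but the argument is the same.
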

\begin{proof}
The state equation of $y_i(k)$ for a node $i$ is give by 
\begin{align*}
y_i(k)&=\frac{k}{k+1}y_i(k-1)+\frac{1}{k+1}x_i(k)=\frac{1}{k+1}\sum _{t=0}^{t=k}x_i(t)\\
&=\frac{1}{k+1}\left(\sum _{t=0}^{t=T_{conv}-1}x_i(t)\right)+\frac{1}{k+1}\left(\sum _{t=T_{conv}}^{t=k}x_i(t)\right),
\end{align*}
where $T_{conv}$ is the finite time iteration when the nodes' values start cycling. As $k$ approaches infinity, the left part in the sum vanishes while the right part converges to the average of the values in a cycle, i.e.
$$\lim _{k\rightarrow \infty}y_i(k)=y_i^*=\frac{1}{P}\sum _{t=T_{conv}}^{t=T_{conv}+P-1}{x_i(t)},$$
where $P$ is the cycle period. 
Since for $k\geq T_{conv}$ we have $|x_i(k)-x_{ave}|\leq 1$ from Proposition \ref{ch4:PropMain3}, then $|y_i^*-x_{ave}|\leq 1$.
\end{proof}

Moreover, since the final behavior of the system depends on the initial values as shown by Proposition \ref{ch4:PropMain3}, we give here a  condition on the initial values for the nodes to reach quantized consensus in networks: 
\begin{col}\label{ch4:PropFinal}
Consider the quantized system \eqref{ch4:quantizedstateeq2}. Suppose that Assumption~\ref{ch4:ass1} holds. If the initial values $\mathbf{x}(0)$ satisfy,
\begin{equation}\label{ch4:CondInit}
 \alpha \leq x_{ave}-\lfloor x_{ave} \rfloor \leq  1-\alpha,
\end{equation}
then the network reaches quantized consensus.
\end{col}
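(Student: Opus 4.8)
The plan is a short contradiction argument that couples Proposition~\ref{ch4:PropMain3} with the fact that the average is conserved. By Proposition~\ref{ch4:PropMain3} there is a finite iteration $T$ at which the system is in one of two regimes: either all quantized values already agree, in which case the network has reached quantized consensus and we are done; or all $n$ nodes lie in $X_2(T)\cup X_3(T)$ and the values cycle around $m(T)+1$ (this is the regime $\{X_1,X_4,X_5,X_6\}=\phi$ of Proposition~\ref{ch4:PropMain2} on which the cycling alternative is built). It therefore suffices to show that, under hypothesis~\eqref{ch4:CondInit}, the cycling regime cannot occur.

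Assume we are in that regime and set $m=m(T)$. From the definitions of $X_2$ and $X_3$, every node satisfies $|x_i(T)-(m+1)|\le\alpha_i$, i.e.\ $\mathbf{x}(T)\in S_T$; the interior-point property of $S_k$ recorded in the text just before the definition of the Lyapunov function $V$ (a consequence of \eqref{ch4:gamma1}--\eqref{ch4:gamma2}) then upgrades this to the strict bound $|x_i(T)-(m+1)|<\alpha_i$ for every $i\in\mathcal V$. Averaging these inequalities over $i$ and using conservation of the average, Eq.~\eqref{ch4:AveCons}, yields
$$
\bigl|x_{ave}-(m+1)\bigr|
=\Bigl|\frac{1}{n}\sum_{i\in\mathcal V}\bigl(x_i(T)-(m+1)\bigr)\Bigr|
<\frac{1}{n}\sum_{i\in\mathcal V}\alpha_i\le\alpha .
$$

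Finally I would note that $\alpha=\max_i\alpha_i\in(0,1/2)$, so the bound above says $x_{ave}$ is at distance strictly less than $\alpha$ from the integer $m+1$, and $x_{ave}\in(m+1-\alpha,\,m+1+\alpha)\subset(m,\,m+2)$. Splitting on the sign of $x_{ave}-(m+1)$: if $x_{ave}\ge m+1$ then $\lfloor x_{ave}\rfloor=m+1$ and $x_{ave}-\lfloor x_{ave}\rfloor\in[0,\alpha)$; if $x_{ave}<m+1$ then $\lfloor x_{ave}\rfloor=m$ and $x_{ave}-\lfloor x_{ave}\rfloor\in(1-\alpha,1)$. In both cases this contradicts $\alpha\le x_{ave}-\lfloor x_{ave}\rfloor\le1-\alpha$. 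Hence the cycling regime is impossible, so Proposition~\ref{ch4:PropMain3} leaves only the quantized-consensus alternative; since at that point all node floors coincide, every increment $L_{ji}$ vanishes, $\mathbf{x}(\cdot)$ is frozen from $T$ onward, and quantized consensus persists for all $k\ge T$. The only delicate step is passing from the weak bound $|x_i(T)-(m+1)|\le\alpha_i$ to the strict one, since otherwise the borderline values $x_{ave}-\lfloor x_{ave}\rfloor\in\{\alpha,1-\alpha\}$ would slip through; but this strictness is exactly the interior-point observation already in the text, so no new estimate is required and the rest is bookkeeping.
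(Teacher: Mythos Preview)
Your proof is correct and follows essentially the same contradiction strategy as the paper: assume the cycling alternative of Proposition~\ref{ch4:PropMain3}, trap $x_{ave}$ near the integer $m+1$, and observe this is incompatible with~\eqref{ch4:CondInit}. The one place where you are more careful than the paper is in invoking the interior-point property of $S_k$ to upgrade $|x_i(T)-(m+1)|\le\alpha_i$ to a strict inequality; this is needed to rule out the borderline values $x_{ave}-\lfloor x_{ave}\rfloor\in\{\alpha,1-\alpha\}$ allowed by the weak hypothesis~\eqref{ch4:CondInit}, a point the paper's own proof elides (it silently switches to strict inequalities in the condition).
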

\begin{proof}
If the system was cyclic, then for any node $i\in \mathcal{V}$, we have $i\in \{X_1, X_2\}$, so $x_i(k)\in [m+1-\alpha _i, m+1 +\alpha _i]$. This implies that $x_{ave}(k)\in [m+1-\alpha _i, m+1 +\alpha _i]$, but since the average is conserved (from equation \eqref{ch4:AveCons}), it also implies that $x_{ave}\in [m+1-\alpha _i, m+1 +\alpha _i]$. From the latter condition, we see that if $\alpha < x_{ave}-\lfloor x_{ave} \rfloor < 1-\alpha$,  the system cannot be cyclic, and by Proposition \ref{ch4:PropMain3}, it must reach quantized consensus.
\end{proof}
\subsection{Design of weights with arbitrarily small error}

If the system has reached quantized consensus, the values of the agents' agreement variables become stationary and the deviation of these values from the average is no larger than 1. In the case when the system does not reach quantized consensus but becomes cyclic, Proposition \ref{ch4:PropMain3} shows that the deviation of nodes' values from the average is upper bounded by $2\alpha$ where $\alpha =\max_i\alpha _i$. Moreover the deviation can be made arbitrarily small by adjusting the weights in a distributed manner. Toward that end, we propose the following modified Metropolis weights:
\begin{eqnarray*}
w_{ij} &=& \frac{1}{C\left(\text{max}\{d_i,d_j\}+1\right)},  \ \ \forall (i,j)\in \mathcal{E}\\
w_{ii} &=& 1-\sum _{j\in \mathcal{N}_i}w_{ij}, \ \ \forall i\in \mathcal{V}
\end{eqnarray*}
where $C$ is any rational constant such that $C\geq 2$. It can be easily checked that the proposed weights satisfy Assumption~\ref{ch4:ass1}. Moreover, in addition to its distributed nature, the choice of $C$ can be used to define the error. Notice that  for any $i\in \mathcal{V}$, we have $w_{ii}>1-\frac{1}{C}\geq 1-\frac{1}{C}+\gamma$, so
\begin{align*}
\alpha \leq \frac{1}{C},
\end{align*} 
which shows that  given an arbitrary level of precision known to all the agents, the agents can choose the weights with  large enough $C$ in a distributed manner, so that the neighborhood of the cycle  will be close to the average with the given precision. Notice that if $x_{ave}\neq \lfloor x_{ave} \rfloor$, then for $\alpha$ small enough, the system cannot be cyclic and only quantized consensus can be reached (Corollary~\ref{ch4:PropFinal}). In other words, for systems starting with different initial values, having a smaller $\alpha$ leads more of these systems to converge to quantized consensus (and of course if they cycled, they will cycle in a smaller neighborhood as well due to  Proposition~\ref{ch4:PropMain3}).

It is worth mentioning that this arbitrarily small neighborhood weight design has a trade-off with the speed of convergence of quantized consensus protocol  (small error weight design leads to slower convergence).

\section{Simulations}\label{ch4:sec:S}

\begin{figure}
\begin{center}
\includegraphics[scale=0.45]{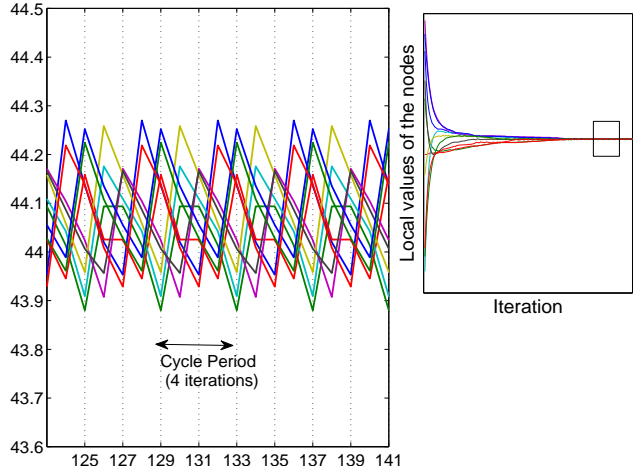}
\caption{The nodes' values are entering into a cycle.
} 
\label{cycle}
\end{center}
\end{figure}

In this section, we present some simulations to demonstrate the theoretical results in the previous section. The weights for the simulations satisfy Assumption 1 and are the modified Metropolis weights with $C=2$, i.e. $$w_{ij} = \frac{1}{2\left(\text{max}\{d_i,d_j\}+1\right)}  \ \ \forall (i,j)\in \mathcal{E}.$$

\begin{figure}
\begin{center}
\includegraphics[scale=0.45]{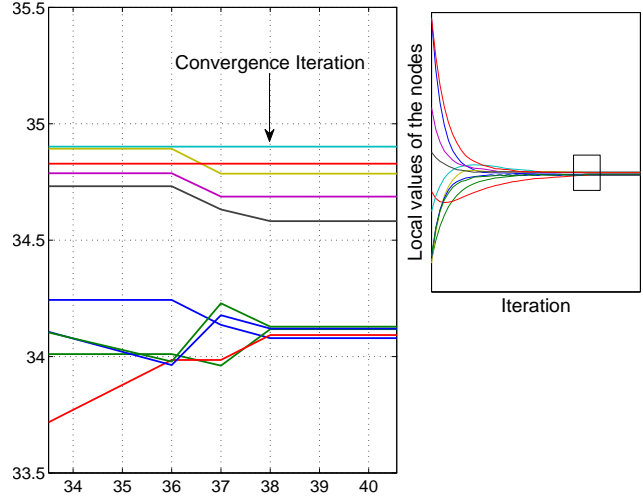}
\caption{The nodes' values are converging.
} 
\label{consensus}
\end{center}
\end{figure}

\subsection{A Simple Network}
Proposition \ref{ch4:PropMain3} shows that depending on the initial state $\mathbf{x}(0)$, the system reaches in finite time one of the two possibilities: 1) cyclic, 2)quantized consensus. We show on a network of $10$ nodes with initial values  selected uniformly at random from the interval $[0,100]$  that both of these are possible. Fig.~\ref{cycle}, shows that after a certain iteration, the nodes' values enter into a cycle of period 4 iterations, while Fig.~\ref{consensus} shows that starting from different initial values, all the 10 nodes reach quantized consensus in finite time. Mainly, at iteration 38, all nodes' values are between 34 and 35; therefore, we have $$\lfloor x_i(k) \rfloor=34 \hspace*{1cm} \forall i=1,\dots ,10, \;\; \forall k\geq 38.$$

\subsection{Random Graphs}

To further simulate our theoretical results, we need to select some network model. The simulations are done on random graphs: Erd\"os-Renyi (ER) graphs and Random Geometric Graphs (RGG), given that they are connected. The random graphs are generated as follows: 
\begin{itemize}
\item For the ER random graphs, we start from $n$ nodes fully connected graph, and then every link is removed from the graph by a probability $1-P$ and is left there with a probability $P$. We have tested the performance for different probabilities $P$ given that the graph is connected.
\item For the RGG random graphs, $n$ nodes are thrown uniformly at random on a unit square area, and any two nodes within a connectivity radius $R$ are connected by a link (the connectivity radius $R$ is selected as $R=\sqrt{c\times \frac{\log (n)}{n}}$ where $c$ is a constant that is studied by wide literature on  RGG for connectivity). We have tested the performance for different connectivity radii given that the graph is connected. It is known that for a small connectivity radius, the nodes tend to form clusters. 
\end{itemize} 

Since Proposition \ref{ch4:PropMain3} shows that the system would reach one of the cases in finite time, let us define $T_{conv}$ be this time. Notice that if nodes enter the  cyclic states (case 1), the Lyapunov function is null because for all $i\in \mathcal{V}$ and $k\geq T_{conv}$, we have $x_i(k)\in [m+1-\alpha_i,m+1+\alpha_i]$ , so we can write, 
$$V(k)=0 \; \; \forall k\geq T_{conv}.$$
However, if nodes reached quantized convergence (case 2), then the Lyapunov function is a constant because for all $i\in \mathcal{V}$ and $k\geq T_{conv}$, we have $x_i(k)\in [m,m+1]$, so we can write,
$$V(k)=cte \; \; \forall k\geq T_{conv}.$$

\subsubsection{Lyapunov Function}

\begin{figure}
\begin{center}
\includegraphics[scale=0.45]{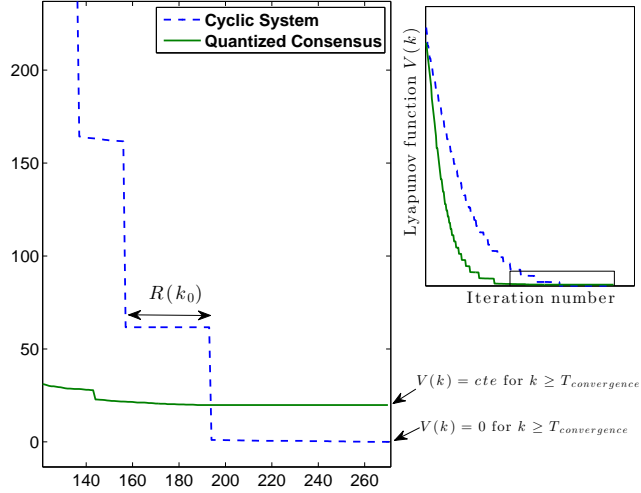}
\caption{The system Lyapunov function $V(k)$.
} 
\label{lyapunov}
\end{center}
\end{figure}

Fig.~\ref{lyapunov} shows the Lyapunov functions for the two different cases on an RGG with $100$ nodes and $R=0.2146$, where each case corresponds to initial values of nodes selected uniformly at random from the interval $[0,100]$. The figure also shows $R(k_0)$ which is the number of iterations after $k_0$ up till $V(k)$ decreases (S1 or S2 occurs).

\subsubsection{Quantized Consensus}

Given that we are considering Metropolis weights with $C=2$, then the system satisfies \eqref{ch4:CondInit} if initial states are such that $x_{ave}-\lfloor x_{ave}\rfloor =0.5$. We considered $RGG$ and $ER$ graphs of 100 nodes, where the initial condition is chosen as follows: the first $99$ nodes are given uniformly random initial values from the interval $[0,100]$, while the last node is given an initial value such that $x_{ave}-\lfloor x_{ave} \rfloor=0.5$ is satisfied. Therefore,  with these initial values, by applying Corollary \ref{ch4:PropFinal}, the system reaches quantized consensus in finite time $T_{conv}$. Table I shows the mean value over 100 runs of the $T_{conv}$ for the RGG with different connectivity radii, $R_1<R_2<R_3<R_4<R_5$, where $R\in \{0.1357, 0.1517, 0.1858, 0.2146, 0.3717\}$. The results show that the more the graph is connected, the faster the convergence. These results are also shown to be true on ER graphs. Table II shows the mean value over 100 runs of the $T_{conv}$ for the ER with different probability $P$ , $P_1<P_2<P_3<P_4$, where $P\in \{0.04, 0.06, 0.08, 0.10\}$.

\begin{table}
\centering  
\begin{tabular}{|c |c c c c c|} 
\hline                        
&\multicolumn{5}{|c|}{RGG $n=100$}\\
 &  $R_1$ &  $R_2$  &  $R_3$ &  $R_4$  & $R_5$ \\  
\hline                  
$T_{conv}$ & 1965.3 & 1068.9 & 364.3  & 233.3 & 55.9 \\
\hline 
\end{tabular}
\label{tab:first} 
\caption{Convergence time for Random Geometric Graphs (RGG) with different connectivity radii (averaged over 100 runs).} 
\end{table}

\begin{table}
\centering  
\begin{tabular}{|c |c c c c |} 
\hline                        
&\multicolumn{4}{|c|}{ER $n=100$}\\
 &  $P_1=0.04$ &  $P_2=0.06$  &  $P_3=0.08$ &  $P_4=0.10$   \\  
\hline                  
$T_{conv}$ & 161.49 & 99.38 & 66.58 & 43.43  \\
\hline 
\end{tabular}
\label{tab:second} 
\caption{Convergence time for Erdos Renyi (ER) with different probabilities of link existence (averaged over 100 runs).} 
\end{table}

\section{Conclusion}\label{ch4:sec:C}
In this paper, we studied the performance of deterministic distributed averaging protocols subject to communication quantization. We have shown that quantization due to links can force quantization on the state. Depending on initial conditions, the system converges in finite time to either a quantized consensus, or the nodes' values are entering into a cyclic behavior oscillating around the average.

Since the quantized consensus can be considered as a cyclic state with cycle period equal to zero, we will be investigating in future work the cycle period of the system. Moreover, we have just considered  in this paper fixed networks with synchronous iterations, but since the weights for the quantized distributed averaging are selected in a totally distributed way, we are planning on extending this study to include asynchronous updates on time varying networks.

\section{Acknowledgment }
This research was partially supported by the U.S. Air Force Office of Scientific Research (AFOSR) MURI grant \text{FA9550-10-1-0573}. The authors would like to thank Paolo Frasca for the useful discussion of the cyclic example in Section \ref{subsec:CE}.

\bibliographystyle{abbrv}
\bibliography{./sigmetrics_bib,./ji}

\end{document}